\documentclass[a4paper,11pt]{article}

\usepackage{amsmath,amssymb,amsthm,mathtools}
\usepackage{booktabs}
\usepackage{enumitem}
\usepackage{geometry}
\usepackage{hyperref}
\hypersetup{
  unicode=true,
  colorlinks=true,
  linkcolor=black,
  citecolor=black,
  urlcolor=blue,
  pdftitle={Brauer Relations for Symmetric Groups: Young--Wreath Quotients and the n-Cycle Mark Obstruction},
  pdfauthor={Wakatake Masahiro}
}

\numberwithin{equation}{section}

\newtheorem{theorem}{Theorem}[section]
\newtheorem{maintheorem}[theorem]{Main Theorem}
\newtheorem{proposition}[theorem]{Proposition}
\newtheorem{lemma}[theorem]{Lemma}
\newtheorem{corollary}[theorem]{Corollary}

\newtheorem{definition}[theorem]{Definition}
\newtheorem*{maintheoremrestated}{Main Theorem~\ref{thm:main}}
\newtheorem*{primitivecomparisonrestated}{Main Theorem~\ref{thm:primitive-comparison}}

\newcommand{\Q}{\mathbb{Q}}
\newcommand{\Z}{\mathbb{Z}}

\newcommand{\Ind}{\operatorname{Ind}}
\newcommand{\Inf}{\operatorname{Inf}}
\newcommand{\Res}{\operatorname{Res}}
\newcommand{\Prim}{\operatorname{Prim}}
\newcommand{\coeff}{\operatorname{coeff}}

\newcommand{\red}{\operatorname{red}}
\newcommand{\one}{\mathbf{1}}
\newcommand{\Hom}{\operatorname{Hom}}

\title{Brauer Relations for Symmetric Groups:\\
Young--Wreath Quotients and the \textit{n}-Cycle Mark Obstruction}
\author{Wakatake Masahiro}
\date{}

\begin{document}
\maketitle

\begin{abstract}
Let $K(S_n)$ be the kernel of the linearization map from the Burnside ring of the symmetric group $S_n$ to its rational representation ring.  We denote by $N_n$ the free $\mathbb Z$-sublattice with basis given by the standard Brauer relations $\Theta_H$, indexed by the $S_n$-conjugacy classes of subgroups that are either intransitive or transitive imprimitive and are not conjugate to Young subgroups.  We also let $J_n$ be the $\mathbb Z$-lattice of relations induced from the kernels of the linearization maps for proper Young subgroups and standard wreath subgroups $S_a\wr S_b$, and put $O_n=N_n/J_n$.  For every composite integer $n\ge4$, we construct, using the $C_n$-mark and the Young section, an integral homomorphism $\omega_n:N_n\to\Z$ and prove directly over $\mathbb Z$ that
\[
  J_n=\ker(\omega_n|_{N_n}),\qquad O_n\cong\Z.
\]
We further construct a natural comparison homomorphism from $O_n$ to the primitive quotient $\Prim(S_n)$, obtained by factoring out all imprimitive relations arising from proper subquotients.  Combined with the classification theorem of Bartel--Dokchitser, this shows that $O_n\xrightarrow{\sim}\Prim(S_n)$ for composite $n\ge6$, while for $n=4$ the comparison map is reduction modulo $2$ under the identifications $O_4\cong\Z$ and $\Prim(S_4)\cong\Z/2\Z$.  On the other hand, for prime $n\ge5$, one has $O_n=0$ whereas $\Prim(S_n)\cong\Z$.  Finally, we study the monomial Burnside ring over $C_2$ and show, via an additive map that we call index-two permutation reduction, that the resulting quotient is canonically isomorphic to the ordinary Young--wreath quotient.
\end{abstract}

\begin{center}
\small
\textbf{2020 Mathematics Subject Classification.}
19A22, 20B30, 20C30.\qquad
\textbf{Keywords.}
Burnside ring, monomial Burnside ring, Brauer relation, symmetric group, wreath product, primitive quotient.
\end{center}

\section{Introduction}

The Burnside ring, the rational representation ring, the linearization map, and Brauer relations for a finite group $G$ are recalled in Section~\ref{sec:prelim}. We use the notation
\[
  \ell_G:\Omega(G)\longrightarrow R_{\Q}(G),\qquad
  K(G)=\ker\ell_G.
\]
Let $I(G)$ denote the lattice of imprimitive relations generated by induction from proper subgroups and inflation from proper quotient groups, and write
\[
  \Prim(G)=K(G)/I(G).
\]
Primitive relations and the groups $\Prim(G)$ for arbitrary finite groups were classified by Bartel--Dokchitser~\cite{BartelDokchitserBrauer}. The purpose of this paper is to show that, for symmetric groups, the primitive quotient can be described using only a Young--wreath induction family that is smaller than the family of all proper subquotients.

In Section~\ref{subsec:young-partial}, we construct the partial Burnside ring
\[
  P_n:=\Omega(S_n,\mathcal Y_n)
\]
associated with the family $\mathcal Y_n$ of Young subgroups, together with the inverse
\[
  \sigma_n:R_{\Q}(S_n)\xrightarrow{\sim}P_n
\]
of the restriction of the linearization map. To each subgroup $H\le S_n$, we associate the standard relation
\begin{equation}\label{eq:theta-intro}
  \Theta_H=[S_n/H]-\sigma_n\bigl([\Q[S_n/H]]\bigr)\in K(S_n).
\end{equation}

Suppose that the natural action of a subgroup $H\le S_n$ on $\{1,\ldots,n\}$ is either intransitive or transitive imprimitive. For brevity, we call such a subgroup $H$ \emph{action-imprimitive}, and denote the set of all such subgroups by $\mathcal H_n^{\mathrm{np}}$. This is terminology local to the present paper and is distinct from ``imprimitive'' as applied to Brauer relations.

By Proposition~\ref{prop:standard-basis}, the standard relations indexed by
non-Young conjugacy classes are linearly independent. We therefore set
\begin{equation}\label{eq:N-intro}
  N_n
  :=
  \bigoplus_{\substack{
    (H)\in\operatorname{Sub}(S_n)^c\\
    H\in\mathcal H_n^{\mathrm{np}}\\
    H\text{ is not conjugate to a Young subgroup}
  }}
  \Z\Theta_H
  \subseteq K(S_n).
\end{equation}
Thus the displayed elements $\Theta_H$ form a $\Z$-basis of $N_n$; in
particular, no zero relations or repeated conjugacy classes occur in this
presentation. For a factorization $n=ab$ with $a,b>1$, let
\[
  W_{a,b}=S_a\wr S_b=S_a^b\rtimes S_b\le S_n
\]
be the stabilizer of the standard system of $b$ blocks of size $a$, that is, the standard wreath subgroup associated with the natural imprimitive action. We further define
\begin{align}
 J_n^{\mathrm Y}
 &:=\sum_{\substack{\lambda\vdash n\\\lambda\ne(n)}}
 \Ind_{S_\lambda}^{S_n}K(S_\lambda),\\
 J_n^{\mathrm W}
 &:=\sum_{\substack{ab=n\\a,b>1}}
 \Ind_{W_{a,b}}^{S_n}K(W_{a,b}),
 \qquad J_n:=J_n^{\mathrm Y}+J_n^{\mathrm W}.
 \label{eq:J-intro}
\end{align}
We shall prove that $J_n\subseteq N_n$, and call
\[
  O_n:=N_n/J_n
\]
the \emph{Young--wreath quotient}.

Let $c=(1\ 2\ \cdots\ n)$ and $C_n=\langle c\rangle$. Among the fixed-point marks introduced in Section~\ref{subsec:marks}, we use
\[
 \phi_{C_n}:\Omega(S_n)\longrightarrow\Z,
 \qquad \phi_{C_n}([S_n/H])=|(S_n/H)^{C_n}|.
\]
Writing $\coeff_{[S_n/H]}(x)$ for the coefficient of $[S_n/H]$
in the Burnside basis expansion of $x$, we define
\begin{equation}\label{eq:omega-intro}
  \omega_n:K(S_n)\longrightarrow\Z,
  \qquad \omega_n(x)=-\coeff_{[S_n/S_n]}(x).
\end{equation}
In Proposition~\ref{prop:fixed-coset}, we prove that
$\omega_n(\Theta_H)=\phi_{C_n}([S_n/H])$. We call $\omega_n$ the \emph{$n$-cycle mark obstruction}.

\begin{maintheorem}\label{thm:main}
Let $n\ge4$ be composite. Then
\[
  J_n=\ker(\omega_n|_{N_n}),\qquad
  \omega_n(N_n)=\Z.
\]
Consequently, $\omega_n$ induces an isomorphism
\[
  \overline\omega_n:O_n=N_n/J_n\xrightarrow{\sim}\Z.
\]
Moreover, for every proper subgroup $H<S_n$,
\begin{equation}\label{eq:fixed-main}
  \omega_n(\Theta_H)
  =\chi_{\Q[S_n/H]}(c)
  =|(S_n/H)^{\langle c\rangle}|.
\end{equation}
\end{maintheorem}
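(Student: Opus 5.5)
The plan is to prove formula~\eqref{eq:fixed-main} first, then the containment $J_n\subseteq\ker(\omega_n|_{N_n})$ and surjectivity, and finally the reverse containment by reducing every basis element of $N_n$ modulo $J_n$ to a multiple of one fixed generator.

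\textbf{Formula~\eqref{eq:fixed-main} and surjectivity.} Let $H<S_n$ be proper. The coefficient of $[S_n/S_n]$ in $[S_n/H]$ is $0$, so $\omega_n(\Theta_H)=\coeff_{[S_n/S_n]}\sigma_n([\Q[S_n/H]])$. Write $\sigma_n(V)=\sum_\lambda a_\lambda[S_n/S_\lambda]$. Evaluate the linearization, a permutation character, at $c$. For $\lambda\ne(n)$ the $n$-cycle $c$ fixes no coset of $S_\lambda$, since no conjugate of $c$ lies in an intransitive group. Hence $\chi_V(c)=a_{(n)}$. With $V=\Q[S_n/H]$ this gives $\omega_n(\Theta_H)=\chi_{\Q[S_n/H]}(c)=|(S_n/H)^{\langle c\rangle}|$, consistent with Proposition~\ref{prop:fixed-coset}.

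For surjectivity I take $H=N_{S_n}(C_n)$. Any block system of $C_n$ consists of the cosets of a subgroup of $\Z/n$, and automorphisms of $\Z/n$ preserve such subgroups. Since $n$ is composite, $H$ is therefore transitive imprimitive. It is not Young, because $|H|=n\varphi(n)<n!$. The conjugates of $C_n$ contained in $H$ are just $C_n$ itself, so $|(S_n/H)^{C_n}|=|N(C_n)|/|H|=1$.

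\textbf{$J_n\subseteq\ker\omega_n$ and $J_n\subseteq N_n$.} If $L<S_n$ is proper and $x\in K(L)$, then $\Ind_L^{S_n}x$ is a combination of $[S_n/K]$ with $K\le L$. Hence it has zero $[S_n/S_n]$-coefficient and lies in $K(S_n)$.

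To get $J_n\subseteq N_n$, I would use Proposition~\ref{prop:standard-basis}: $K(S_n)$ has basis the $\Theta_H$ over non-Young classes. An element of $\Ind_L^{S_n}K(L)$ involves only subgroups of $L$, and every subgroup of a proper Young or wreath subgroup is action-imprimitive. So its expansion $x=\sum_H x_H\Theta_H$, read off from the non-Young Burnside coefficients, involves only action-imprimitive $H$.

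\textbf{Intransitive reduction.} For $H\le S_\lambda$ with $\lambda\ne(n)$, I use the Young section $\sigma_\lambda$ of $S_\lambda=\prod S_{\lambda_i}$. Young subgroups of $S_\lambda$ are Young in $S_n$, and induction commutes with linearization, so $\Ind\circ\sigma_\lambda=\sigma_n\circ\Ind$. Therefore
\[
\Theta_H=\Ind_{S_\lambda}^{S_n}\bigl([S_\lambda/H]-\sigma_\lambda(\Q[S_\lambda/H])\bigr)\in J_n^{\mathrm Y}.
\]

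\textbf{Transitive imprimitive reduction (main obstacle).} For transitive imprimitive $H\le W=W_{a,b}$, I need to show that $\Q[W/H]$ equals $\ell_W(y)$ for some $y\in\Omega(W)$. This $y$ must be an integral combination of $[W/W]$ and of $[W/K]$ with $K\le W$ intransitive on $\{1,\dots,n\}$.

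Given such a $y$, the element $[W/H]-y$ lies in $K(W)$, so modulo $J_n^{\mathrm W}$ we have $\Theta_H\equiv\Ind y-\sigma_n\ell(\Ind y)$. By the intransitive step this is $\equiv m\,\Theta_W$. Comparing $\omega_n$ on both sides identifies $m$, using $\omega_n(\Theta_W)=1$: $C_n$ has a unique block system with $b$ blocks.

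To construct $y$, I would first try the known fact that wreath products of symmetric groups have all rational representations permutation-realizable. I would then refine it using the Young-type description of irreducibles of $S_a\wr S_b$, namely $\Ind_{S_a\wr S_\mu}\bigl(\bigotimes\cdots\bigr)$. The aim is to write every character integrally through sets $W/K$ with $K$ either intransitive or all of $W$. Here intransitive means $K$ lies in a product of Young subgroups of the $S_a$ factors combined with a Young subgroup of $S_b$. This integrality is where I expect the real work to lie.

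\textbf{Conclusion.} Apply the transitive reduction to $H=N(C_n)\le W_{a,b}$ for each factorization $n=ab$. Since $\omega_n(\Theta_H)=1=\omega_n(\Theta_{W_{a,b}})$, we get $\Theta_H\equiv\Theta_{W_{a,b}}$ for all $(a,b)$. Hence every $\Theta_H$ spanning $N_n$ satisfies $\Theta_H\equiv\omega_n(\Theta_H)\,\Theta_{N(C_n)}\pmod{J_n}$. This gives $\ker(\omega_n|_{N_n})\subseteq J_n$, and hence $O_n\cong\Z$ via $\overline\omega_n$.
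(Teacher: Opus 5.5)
Your formula for $\omega_n(\Theta_H)$ and your proof of $J_n\subseteq N_n\cap\ker\omega_n$ are fine and match the paper. However, the reverse inclusion $\ker(\omega_n|_{N_n})\subseteq J_n$ is not established. It rests entirely on your ``transitive imprimitive reduction'': for every transitive $H\le W_{a,b}$ you need an \emph{integral} $y\in\Omega(W_{a,b})$, supported on $[W/W]$ and on $[W/K]$ with $K$ intransitive, such that $\ell_W(y)=\ell_W([W/H])$. Rationally this is easy, by Artin induction together with the fact that the $n$-cycles of $W$ form a single $W$-class. The integral statement, however, is a genuine lattice question about $R_{\Q}(S_a\wr S_b)$, and your sketch via the irreducibles of $S_a\wr S_b$ explicitly leaves it open.

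The paper needs no such input. For $x\in\ker(\omega_n|_{N_n})$, the coefficient of $[S_n/S_n]$ vanishes, so $x$ is a $\Z$-combination of $[S_n/H]$ with $H$ action-imprimitive. Likewise $\mathcal R=-\Theta_{W_{a,b}}=[S_n/S_n]+u$ with $u$ of the same kind. By the projection formula, $[S_n/H]\,z=\Ind_H^{S_n}\Res_H^{S_n}z\in\Ind_H^{S_n}K(H)\subseteq J_n$ for every $z\in K(S_n)$ and every action-imprimitive $H$. Hence $x\mathcal R\in J_n$ and $xu\in J_n$, so $x=x\mathcal R-xu\in J_n$. Only one $W_{a,b}$ enters, so no comparison between different factorizations is needed.

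Separately, your claim that $C_n$ is the only conjugate of $C_n$ inside $N_{S_n}(C_n)\cong\mathrm{Hol}(\Z/n)$ is false in general. For $n=8$, the affine map $x\mapsto 5x+1$ acts on $\Z/8$ as the $8$-cycle $(0\,1\,6\,7\,4\,5\,2\,3)$ and is not a translation. In fact $\omega_8(\Theta_{N_{S_8}(C_8)})=|(S_8/N_{S_8}(C_8))^{C_8}|=2$, and the analogous value for $n=9$ is $3$.

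So your surjectivity argument fails as written. Use $\Theta_{W_{a,b}}$ instead, for which you already note $\omega_n=1$. More seriously, your final linking step would then give only $2(\Theta_{W_{2,4}}-\Theta_{W_{4,2}})\in J_8$, which does not show $\Theta_{W_{2,4}}-\Theta_{W_{4,2}}\in J_8$. A subgroup that does work for linking is $\operatorname{Stab}_{S_n}(\mathcal B)\cap\operatorname{Stab}_{S_n}(\mathcal B')$, where $\mathcal B,\mathcal B'$ are the two $C_n$-invariant block systems; it has exactly one $C_n$-fixed coset. With the paper's argument, though, this step disappears altogether.
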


Following the notation of Section~\ref{subsec:primitive-prelim}, put
\[
  I_n:=I(S_n),\qquad \Prim(S_n)=K(S_n)/I_n.
\]
Since $J_n\subseteq I_n$, there is a natural homomorphism
$\rho_n:O_n\to\Prim(S_n)$.

\begin{maintheorem}\label{thm:primitive-comparison}
Let $n\ge4$.
\begin{enumerate}[label=\textup{(\roman*)},leftmargin=3em]
\item If $n\ge6$ is composite, then $\rho_n:O_n\xrightarrow{\sim}\Prim(S_n)$ is an isomorphism.
\item If $n=4$, then, under the identifications $O_4\cong\Z$ and $\Prim(S_4)\cong\Z/2\Z$, the map $\rho_4$ is reduction modulo $2$.
\item If $n\ge5$ is prime, then $O_n=0$, whereas $\Prim(S_n)\cong\Z$.
\end{enumerate}
\end{maintheorem}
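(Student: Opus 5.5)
We prove the three parts separately, using Main Theorem~\ref{thm:main} to identify $O_n\cong\Z$ via $\overline\omega_n$. In each case the task is to compare $J_n$ with $I_n\cap N_n$, and to check that $N_n$ together with $I_n$ spans $K(S_n)$. The statement is about $\rho_n$ itself, so we first establish surjectivity of $\rho_n$ in general.

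\textbf{Surjectivity for composite $n$.} By Proposition~\ref{prop:standard-basis}, $K(S_n)$ is spanned by the $\Theta_H$ with $H$ ranging over non-Young classes. If $H$ is primitive and proper, we aim to show $\Theta_H\in I_n$. Such an $H$ is neither intransitive nor imprimitive, but it is still a proper subgroup. Consider $x=\Ind_H^{S_n}\bigl([H/H]-\sigma([\Q])\bigr)$, formed inside $K(H)$ using any rational Brauer-type decomposition of the trivial character of $H$. Its difference with $\Theta_H$ involves only Young classes, and the Young classes lie in $\sigma_n(R_\Q)$. Because $P_n\cap K(S_n)=0$, that difference vanishes, so $\Theta_H\in I_n+N_n$.

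The same comparison, carried out in general, gives $K(S_n)=N_n+(\text{span of }\Theta_H,\ H\text{ primitive})$. Each primitive-type $\Theta_H$ is congruent modulo $I_n$ to an element of $N_n$, and $\Theta_{A_n}$ and $\Theta_{S_n}$ are handled by inflation from $S_n/A_n$. Hence $\rho_n$ is surjective.

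\textbf{Injectivity for composite $n\ge6$.} We must show $I_n\cap N_n\subseteq J_n$, that is, $\omega_n$ vanishes on $I_n\cap N_n$. Our plan is to compare with Bartel--Dokchitser. Their classification gives $\Prim(S_n)\cong\Z$ for $n\ge6$ composite, since $S_n$ is then not of the exceptional types that yield finite $\Prim$. A surjection $\Z\to\Z$ is an isomorphism, so injectivity follows from surjectivity. This is the cleanest route and requires only reading off $\Prim(S_n)$ from the classification. The main obstacle is verifying the precise hypotheses of Bartel--Dokchitser for $S_n$, in particular that the relevant quasi-elementary and almost-simple conditions give $\Prim(S_n)\cong\Z$ rather than a torsion group.

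\textbf{The case $n=4$.} The classification gives $\Prim(S_4)\cong\Z/2\Z$. The surjection $\Z\to\Z/2$ is necessarily reduction mod $2$, and this is unambiguous since $\Z/2$ has a unique generator. The only care needed is surjectivity, already shown above. Equivalently, one can exhibit a relation in $N_4\cap I_4$ with $\omega_4=2$. The natural candidate is induced from $D_4$ or $C_4$, where a $C_4$-fixed coset count equals $2$.

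\textbf{Prime $n\ge5$.} For $n=p$ prime there are no wreath subgroups, and every transitive subgroup is primitive. Thus $N_n$ is spanned by $\Theta_H$ for intransitive non-Young $H$. Such an $H$ lies in some $S_\lambda$ with $\lambda\ne(n)$, and it has no fixed points under the $p$-cycle, so $\omega_n(\Theta_H)=0$ by \eqref{eq:fixed-main}. We must still show $N_n=J_n^{\mathrm Y}$ directly rather than through Main Theorem~\ref{thm:main}, which assumes $n$ composite. The plan is to note that $\Ind_{S_\lambda}^{S_n}$ applied to the standard relation for $H$ inside $S_\lambda$ equals $\Theta_H$. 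This holds because induction sends Young subgroups of $S_\lambda$ to Young subgroups of $S_n$ and commutes with $\sigma$ by the uniqueness in $P_n$. Hence $O_n=0$. Finally, $\Prim(S_p)\cong\Z$ is read off from Bartel--Dokchitser, with a generator coming from the primitive transitive subgroups such as $C_p\rtimes C_{p-1}$.
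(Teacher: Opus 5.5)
Your part (iii) is correct and is the paper's argument: $S_p$ has no transitive imprimitive subgroups, then apply Proposition~\ref{prop:intransitive-in-Y} together with Lemma~\ref{lem:J-in-N}. The genuine gap is your proof that $\rho_n$ is surjective for composite $n$, on which your treatment of (i) and (ii) rests.

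Your intermediate claim that $\Theta_H\in I_n$ for every primitive proper $H$ is false. For $n\ge5$ the inflation part of $I_n$ vanishes. The only quotients by nontrivial normal subgroups are $C_2$ and $1$, and $K(C_2)=K(1)=0$ because linearization is injective for cyclic groups. In particular $\Theta_{A_n}$ is not ``handled by inflation from $S_n/A_n$''. Hence every element of $I_n$ is induced from proper subgroups and has zero coefficient at $[S_n/S_n]$, whereas $\coeff_{[S_n/S_n]}(\Theta_H)=-|(S_n/H)^{C_n}|$. So $\Theta_H\notin I_n$ whenever the primitive group $H$ contains an $n$-cycle. Examples are $H=A_n$ for odd composite $n$, where $\omega_n(\Theta_{A_n})=2$, and $H=\mathrm{PGL}_2(5)$ in its $3$-transitive action of degree $6$, where $\omega_6(\Theta_H)=1$.

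Your construction could not have produced this membership anyway. There is no Young section on $\Omega(H)$, so $[H/H]-\sigma([\Q])$ is undefined. Also, $\Ind_H^{S_n}$ yields basis elements $[S_n/L]$ with $L\le H$, not Young classes, so $x-\Theta_H$ need not lie in $P_n$. If it did, $\Theta_H=x$ would have zero $[S_n/S_n]$-coefficient, contradicting the computation above. The statement you actually need, that each primitive $\Theta_H$ is congruent modulo $I_n$ to an element of $N_n$, is true, but nothing you wrote proves it.

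The missing idea is to absorb the $[S_n/S_n]$-coefficient using an element of $N_n$ whose coefficient there is $1$, namely $\mathcal R_{a,b}=-\Theta_{W_{a,b}}$ (Lemma~\ref{lem:wreath-unit}). You then need to show that relations with zero $[S_n/S_n]$-coefficient are imprimitive. The paper gets this from the clause of Bartel--Dokchitser's Theorem~4.3 saying that $\Prim(S_n)$ is generated by any relation with $\coeff_{[S_n/S_n]}=1$. So $\rho_n$ sends the generator $[\mathcal R_{a,b}]$ of $O_n\cong\Z$ to a generator, and (i) and (ii) follow immediately.

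If you want to keep your route (surjectivity plus the abstract isomorphism type of $\Prim(S_n)$), prove surjectivity with the trick from the proof of Main Theorem~\ref{thm:main}. Let $y\in K(S_n)$ have $\coeff_{[S_n/S_n]}(y)=0$, and write $\mathcal R_{a,b}=[S_n/S_n]+u$. Then $y$ and $u$ are combinations of $[S_n/H]$ with $H<S_n$. By the argument of Lemma~\ref{lem:projection}, $y=y\mathcal R_{a,b}-yu\in\sum_{H<S_n}\Ind_H^{S_n}K(H)$. Hence $K(S_n)=\Z\mathcal R_{a,b}+I_n$.

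For composite $n\ge6$, injectivity is then also elementary: $\omega_n$ vanishes on $I_n$ by the remark above, so $N_n\cap I_n\subseteq\ker(\omega_n|_{N_n})=J_n$. With this fix your deductions are valid: a surjection $\Z\to\Z$ is an isomorphism, and the only surjection $\Z\to\Z/2\Z$ is reduction mod $2$. You still need to check the Bartel--Dokchitser hypotheses as in Lemma~\ref{lem:BD-application} rather than leave them open.

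Finally, your alternative for $n=4$ is not equivalent to surjectivity. A relation in $N_4\cap I_4$ with $\omega_4=2$ only shows $2\Z\subseteq\ker\rho_4$, which is automatic for a map to $\Z/2\Z$. What must be shown is $\rho_4\ne0$, i.e.\ $\Theta_{W_{2,2}}\notin I_4$.
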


Thus, in particular, for composite degrees $n\ge6$, the restricted induction family consisting only of \emph{proper Young subgroups and standard wreath subgroups} realizes the entire primitive quotient.

In Section~\ref{sec:prelim}, we recall partial Burnside rings, marks, the linearization map, and the Young section. In Section~\ref{sec:standard-lattices}, we construct the standard-relation basis and the Young--wreath lattices. In Section~\ref{sec:ncycle}, we compute the $n$-cycle mark obstruction. In Section~\ref{sec:main-result}, we prove the main theorem, and in Section~\ref{sec:primitive-comparison}, we compare the Young--wreath quotient with the primitive quotient. Finally, Section~\ref{sec:monomial} studies the case $A=C_2$ for the monomial Burnside ring.

\section{Preliminaries}\label{sec:prelim}

\subsection{Notation}

Throughout this paper, all groups are finite and all group actions are left actions.
We write $\operatorname{Sub}(G)$ for the set of all subgroups of $G$.
If $\mathcal D$ is a family of subgroups closed under $G$-conjugation, then
$\mathcal D^{c}$ denotes the set of $G$-conjugacy classes contained in $\mathcal D$.
For a finite $G$-set $X$, we denote its isomorphism class by $[X]$, and for a subgroup
$K\le G$ we write
\[
  X^K=\{x\in X\mid kx=x\text{ for all }k\in K\}
\]
for the set of $K$-fixed points. If $M$ is a subset of an additive group, then
$\langle M\rangle_{\Z}$ denotes the $\Z$-submodule generated by $M$.
Whenever a free $\Z$-module is equipped with a specified basis and $B$ is a
basis element, we write
\[
  \coeff_B(x)
\]
for the coefficient of $B$ in the basis expansion of $x$.

\subsection{Burnside rings and partial Burnside rings}
\label{subsec:partial-burnside}

The isomorphism classes of finite $G$-sets form a commutative semiring, with disjoint
union as addition and Cartesian product as multiplication. The \emph{Burnside ring}
of $G$, denoted by $\Omega(G)$, is the Grothendieck ring obtained by group-completing
this semiring with respect to addition. Thus, for finite $G$-sets $X$ and $Y$,
\[
  [X]+[Y]=[X\sqcup Y],\qquad [X][Y]=[X\times Y],
\]
where $X\times Y$ is equipped with the diagonal $G$-action.
Every finite $G$-set decomposes uniquely as a disjoint union of transitive $G$-sets.
Hence, as an additive group,
\begin{equation}\label{eq:burnside-basis}
  \Omega(G)=
  \bigoplus_{(H)\in\operatorname{Sub}(G)^c}\Z[G/H].
\end{equation}
In particular, the product of two basis elements corresponding to transitive $G$-sets
is given by the double-coset formula
\begin{equation}\label{eq:burnside-product}
  [G/H]\,[G/K]
  =\sum_{HgK\in H\backslash G/K}
  [G/(H\cap gKg^{-1})].
\end{equation}

\begin{definition}\label{def:closed-family}
A family $\mathcal D$ of subgroups of $G$ is called a \emph{closed family of subgroups}
if it satisfies the following two conditions:
\begin{enumerate}[label=\textup{(\roman*)},leftmargin=3em]
\item if $H,K\in\mathcal D$, then $H\cap K\in\mathcal D$;
\item if $H\in\mathcal D$ and $g\in G$, then $gHg^{-1}\in\mathcal D$.
\end{enumerate}
If, in addition, $G\in\mathcal D$, then $\mathcal D$ is called a \emph{collection} of $G$.
\end{definition}

For a closed family $\mathcal D$, put
\begin{equation}\label{eq:partial-burnside}
  \Omega(G,\mathcal D)
  :=\bigoplus_{(H)\in\mathcal D^c}\Z[G/H]
  \subseteq\Omega(G).
\end{equation}
By \eqref{eq:burnside-product} and the closure properties of $\mathcal D$, this is a
subring of $\Omega(G)$. It need not be unital when $G\notin\mathcal D$. If $\mathcal D$
is a collection, then $[G/G]\in\Omega(G,\mathcal D)$, and $\Omega(G,\mathcal D)$ is
called the \emph{partial Burnside ring} relative to $\mathcal D$
\cite{YoshidaGeneralized,IdeiOda,OdaTakegaharaYoshida}.

\subsection{Fixed points, marks, and the Burnside homomorphism}
\label{subsec:marks}

For $K\le G$, the fixed-point count defines a map
\begin{equation}\label{eq:mark-general}
  \phi_K:\Omega(G)\longrightarrow\Z,
  \qquad \phi_K([X])=|X^K|,
\end{equation}
called the $K$-\emph{mark}. Since fixed points commute with disjoint unions and
Cartesian products, $\phi_K$ is a ring homomorphism.

Let $H,K\le G$. A coset $gH\in G/H$ is fixed by $K$ if and only if
\[
  g^{-1}Kg\le H.
\]
Indeed,
\[
\begin{aligned}
  gH\in(G/H)^K
  &\Longleftrightarrow
  kgH=gH
  \quad\text{for every }k\in K\\
  &\Longleftrightarrow
  g^{-1}kg\in H
  \quad\text{for every }k\in K\\
  &\Longleftrightarrow
  g^{-1}Kg\le H.
\end{aligned}
\]
Consequently,
\begin{equation}\label{eq:fixed-coset-formula}
  \phi_K([G/H])
  =
  |(G/H)^K|
  =
  \frac{1}{|H|}
  \left|
    \left\{
      g\in G
      \mathrel{}\middle|\mathrel{}
      g^{-1}Kg\le H
    \right\}
  \right|.
\end{equation}
The set on the right-hand side is stable under right multiplication by $H$,
and its right $H$-orbits are in bijection with the cosets in $(G/H)^K$.

For a closed family $\mathcal D$, the restricted Burnside homomorphism
\begin{equation}\label{eq:partial-burnside-hom}
  \Phi_{\mathcal D}:\Omega(G,\mathcal D)
  \longrightarrow\prod_{(K)\in\mathcal D^c}\Z,
  \qquad x\longmapsto(\phi_K(x))_{(K)\in\mathcal D^c},
\end{equation}
is injective \cite[Lemma~3.3]{YoshidaGeneralized}. The matrix
\[
  M_{\mathcal D}=
  \bigl(|(G/H)^K|\bigr)_{(K),(H)\in\mathcal D^c}
\]
is called the \emph{table of marks} relative to $\mathcal D$. The $C_n$-mark used in
this paper is the special case of \eqref{eq:mark-general} with $K=C_n$.

\subsection{Biset functors and the linearization map}
\label{subsec:burnside-functoriality}

Let $\mathbf{Ab}$ denote the category of abelian groups and group homomorphisms.

Let $\mathcal B$ be the biset category of finite groups. Its objects are finite groups.
For finite groups $G$ and $H$, the abelian group of morphisms from $G$ to $H$ is
\[
  \Hom_{\mathcal B}(G,H),
\]
defined as the Grothendieck group, with respect to disjoint union, of the commutative
monoid of isomorphism classes of finite $(H,G)$-bisets.

If $U$ is a finite $(H,G)$-biset and $V$ is a finite $(K,H)$-biset, then the
composition of the corresponding morphisms is represented by the balanced product
\[
  V\times_H U.
\]
Here
\[
  V\times_H U
  :=(V\times U)/{\sim},
  \qquad
  (vh,u)\sim(v,hu)
  \quad
  (v\in V,\ u\in U,\ h\in H).
\]
The identity morphism of a finite group $G$ is represented by the regular
$(G,G)$-biset ${}_GG_G$, with the left and right multiplication actions.

An additive functor
\[
  F:\mathcal B\longrightarrow\mathbf{Ab}
\]
is called a \emph{biset functor}. Thus, it assigns an abelian group $F(G)$ to every
finite group $G$ and a group homomorphism
\[
  F(U):F(G)\longrightarrow F(H)
\]
to every finite $(H,G)$-biset $U$, preserving identity morphisms and composition by
balanced products, and carrying disjoint unions of bisets to sums of homomorphisms.

The Burnside rings define a biset functor
\[
  \Omega:\mathcal B\longrightarrow\mathbf{Ab}.
\]
For a finite $(H,G)$-biset $U$ and $x\in\Omega(G)$, we denote the element induced by
$U$ by
\[
  U\cdot x\in\Omega(H).
\]
In particular, for a finite $G$-set $X$,
\[
  U\cdot[X]=[U\times_G X].
\]

For a finite group $G$, let $R_{\Q}(G)$ denote the Grothendieck ring of
finite-dimensional $\Q G$-modules. These rings likewise define a biset functor
\[
  R_{\Q}:\mathcal B\longrightarrow\mathbf{Ab}.
\]
For a finite $(H,G)$-biset $U$ and a finite-dimensional $\Q G$-module $V$, we write
\[
  U\cdot[V]
  :=[\Q U\otimes_{\Q G}V]
  \in R_{\Q}(H),
\]
where $\Q U$ is the $(\Q H,\Q G)$-bimodule with basis $U$.

For each finite group $G$, the ring homomorphism
\begin{equation}\label{eq:linearization-general}
  \ell_G:\Omega(G)\longrightarrow R_{\Q}(G),
  \qquad [X]\longmapsto[\Q[X]],
\end{equation}
is called the \emph{linearization map}. The family $(\ell_G)_G$ defines a natural
transformation of biset functors
\[
  \ell:\Omega\Longrightarrow R_{\Q}.
\]
Equivalently, for every finite $(H,G)$-biset $U$ and every $x\in\Omega(G)$,
\begin{equation}\label{eq:linearization-naturality}
  \ell_H(U\cdot x)=U\cdot\ell_G(x).
\end{equation}
This follows from the natural isomorphism of $\Q H$-modules
\[
  \Q[U\times_G X]
  \cong
  \Q U\otimes_{\Q G}\Q[X]
\]
for every finite $G$-set $X$.

We next describe induction, restriction, and inflation as special cases of this biset
functor structure. Let $L\le G$. The $(L,G)$-biset ${}_LG_G$ and the $(G,L)$-biset
${}_GG_L$ induce, respectively,
\[
  \Res_L^G:\Omega(G)\longrightarrow\Omega(L),
  \qquad
  \Ind_L^G:\Omega(L)\longrightarrow\Omega(G).
\]
On transitive sets, these maps are given by
\begin{align}
  \Res_L^G[G/H]
  &=\sum_{LgH\in L\backslash G/H}
  [L/(L\cap gHg^{-1})],
  \label{eq:restriction}\\
  \Ind_L^G[L/H]
  &=[G/H].
  \label{eq:induction}
\end{align}

If $N\triangleleft G$, then the $(G,G/N)$-biset ${}_G(G/N)_{G/N}$ obtained from the
quotient map $G\to G/N$ induces inflation
\[
  \Inf_{G/N}^{G}:\Omega(G/N)\longrightarrow\Omega(G).
\]
For $N\le H\le G$,
\[
  \Inf_{G/N}^{G}\bigl([(G/N)/(H/N)]\bigr)=[G/H].
\]

By the naturality in \eqref{eq:linearization-naturality}, these maps commute with the
corresponding maps on representation rings. Explicitly,
\[
\begin{aligned}
  \ell_L\circ\Res_L^G
  &=\Res_L^G\circ\ell_G,\\
  \ell_G\circ\Ind_L^G
  &=\Ind_L^G\circ\ell_L,\\
  \ell_G\circ\Inf_{G/N}^G
  &=\Inf_{G/N}^G\circ\ell_{G/N}.
\end{aligned}
\]
Moreover, Frobenius reciprocity for the multiplicative structure of the Burnside
biset functor gives the projection formula
\begin{equation}\label{eq:projection-formula}
  \Ind_L^G\bigl(x\,\Res_L^G(y)\bigr)
  =\Ind_L^G(x)\,y
  \qquad
  (x\in\Omega(L),\ y\in\Omega(G)).
\end{equation}
For the general theory of biset categories and biset functors, see
Bouc~\cite{BoucBiset}.

\subsection{Brauer relations and the primitive quotient}
\label{subsec:primitive-prelim}

For a finite group $G$, put
\[
  K(G):=\ker\ell_G.
\]
An element of $K(G)$ is called a \emph{Brauer relation}, or a \emph{$G$-relation}.

By the naturality of the linearization map, for every finite $(H,G)$-biset $U$ one has
\[
  U\cdot K(G)\subseteq K(H).
\]
Consequently, the family $K=(K(G))_G$ is a biset subfunctor of the Burnside biset
functor $\Omega$. In particular, restriction, induction, and inflation send Brauer
relations to Brauer relations.

Define
\begin{equation}\label{eq:imprimitive-lattice-general}
  I(G):=
  \sum_{H<G}\Ind_H^G K(H)
  +
  \sum_{\substack{N\triangleleft G\\N\ne1}}
  \Inf_{G/N}^{G}K(G/N)
  \subseteq K(G).
\end{equation}
Thus $I(G)$ is the subgroup generated by Brauer relations induced from proper
subgroups and by Brauer relations inflated from proper quotient groups. Its elements
are called \emph{imprimitive relations}, and the quotient abelian group
\begin{equation}\label{eq:primitive-quotient-general}
  \Prim(G):=K(G)/I(G)
\end{equation}
is called the \emph{primitive Brauer quotient}.

A Brauer relation $\Theta\in K(G)$ is called a \emph{primitive relation} if its class
\[
  \Theta+I(G)\in\Prim(G)
\]
is nonzero. Thus $\Prim(G)$ measures the Brauer relations that cannot be generated by
induction from proper subgroups and inflation from proper quotient groups. For these
definitions and the classification of primitive Brauer quotients for arbitrary finite
groups, see Bartel--Dokchitser~\cite{BartelDokchitserBrauer}.

\subsection{Young partial Burnside rings and the Young section}
\label{subsec:young-partial}

For a partition $\lambda=(\lambda_1,\ldots,\lambda_r)\vdash n$, let
\[
  S_\lambda=S_{\lambda_1}\times\cdots\times S_{\lambda_r}\le S_n
\]
be the corresponding Young subgroup. Let $\mathcal Y_n$ denote the family consisting
of the Young subgroups and their conjugates. The intersection of two conjugates of
Young subgroups is the stabilizer of the common refinement of the corresponding set
partitions. Hence $\mathcal Y_n$ is a collection of $S_n$, and
\begin{equation}\label{eq:young-partial-ring}
  P_n:=\Omega(S_n,\mathcal Y_n)
  =\bigoplus_{\lambda\vdash n}\Z[S_n/S_\lambda]
\end{equation}
is a partial Burnside ring.

Since every irreducible character of $S_n$ can be realized over $\Q$, we identify
$R_{\Q}(S_n)$ with the ordinary character ring \cite{JamesKerber}. Young permutation
characters form a $\Z$-basis of $R_{\Q}(S_n)$ by Young's rule and the integral
unitriangularity of the Kostka matrix
\cite{JamesKerber,Macdonald,IdeiOda}. Therefore, the restriction of the linearization
map
\begin{equation}\label{eq:young-linearization}
  \ell_n:=\ell_{S_n}|_{P_n}:P_n\xrightarrow{\sim}R_{\Q}(S_n)
\end{equation}
is an isomorphism of rings. We denote its inverse by
\begin{equation}\label{eq:young-section}
  \sigma_n:R_{\Q}(S_n)\xrightarrow{\sim}P_n
\end{equation}
and call it the \emph{Young section}.

More generally, let
$L=S_{\lambda_1}\times\cdots\times S_{\lambda_r}$, and let
$P_L\subseteq\Omega(L)$ be the subring spanned by the external products of coset sets
of Young subgroups in the direct factors of $L$. The same argument shows that
\[
  \ell_L|_{P_L}:P_L\xrightarrow{\sim}R_{\Q}(L).
\]
We denote its inverse by $\sigma_L$.

\section{Standard Brauer relations and the Young--wreath lattice}\label{sec:standard-lattices}

\subsection{A basis of standard Brauer relations}

Using the Young section $\sigma_n$ introduced in Section~\ref{subsec:young-partial},
we consider the standard relations $\Theta_H$ defined in
\eqref{eq:theta-intro}.

\begin{proposition}\label{prop:standard-basis}
As $H$ ranges over representatives of the $S_n$-conjugacy classes of subgroups
that are not conjugate to Young subgroups, the elements $\Theta_H$ form a
$\Z$-basis of $K(S_n)$.
\end{proposition}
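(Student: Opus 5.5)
The plan is to exploit the direct-sum decomposition of $\Omega(S_n)$ into a Young part and a non-Young part, and to show that $\Theta_H$ is exactly the "non-Young projection plus correction" that makes the kernel a graph. Write
\[
  \Omega(S_n)=P_n\oplus Q_n,\qquad
  Q_n:=\bigoplus_{\substack{(H)\in\operatorname{Sub}(S_n)^c\\ H\notin\mathcal Y_n}}\Z[S_n/H],
\]
which is a decomposition of free abelian groups read off from the basis \eqref{eq:burnside-basis}. Let $\pi:\Omega(S_n)\to P_n$ be the projection along $Q_n$.

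First, each $\Theta_H$ lies in $K(S_n)$. Indeed, $\ell_{S_n}(\sigma_n(v))=v$ for every $v\in R_{\Q}(S_n)$ by \eqref{eq:young-section}, and so $\ell_{S_n}(\Theta_H)=[\Q[S_n/H]]-[\Q[S_n/H]]=0$.

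Next, define the map $\tau:\Omega(S_n)\to\Omega(S_n)$ by $\tau(x)=x-\sigma_n(\ell_{S_n}(x))$. This is additive, lands in $K(S_n)$, and restricts to the identity on $K(S_n)$. Moreover $\tau$ vanishes on $P_n$, because $\sigma_n\circ\ell_n=\mathrm{id}_{P_n}$. Hence $\tau$ factors through $\Omega(S_n)/P_n\cong Q_n$, giving a homomorphism $\bar\tau:Q_n\to K(S_n)$ with $\bar\tau([S_n/H])=\Theta_H$ for $H$ non-Young.

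It remains to show that $\bar\tau$ is bijective, which transports the basis $\{[S_n/H]\}$ of $Q_n$ to the asserted basis.

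For surjectivity, take $x\in K(S_n)$ and write $x=p+q$ with $p\in P_n$ and $q\in Q_n$. Then $\bar\tau(q)=\tau(x)-\tau(p)=x-0=x$, using $\tau(x)=x$ for $x\in K(S_n)$.

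For injectivity, observe that the $Q_n$-component of $\Theta_H$ is exactly $[S_n/H]$, since $\sigma_n(\cdots)\in P_n$. Thus $\operatorname{id}-\pi$ composed with $\bar\tau$ is the identity on $Q_n$. Equivalently, the coefficient of $\Theta_H$ in any relation is recovered as $\coeff_{[S_n/H]}$, so the $\Theta_H$ are unitriangular with respect to Burnside basis coefficients on non-Young classes.

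No step is a real obstacle. The only points needing care are that $\ell_n$ is an isomorphism over $\Z$, which is the integral unitriangularity of the Kostka matrix already recorded in \eqref{eq:young-linearization}, and that conjugacy classes of Young subgroups are exactly the $(S_\lambda)$. Consequently the $\Theta_H$ depend only on the conjugacy class of $H$, and $\Theta_H=0$ precisely when $H$ is Young, so no repeated or zero elements appear.
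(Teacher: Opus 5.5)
Your proposal is correct and follows essentially the same route as the paper: your $\tau$ is exactly the paper's projection $\pi_n$ with kernel $P_n$, and you show its restriction to the non-Young summand ($Q_n$, the paper's $F_n$) is an isomorphism onto $K(S_n)$ by the same surjectivity argument. Your injectivity argument, that the $Q_n$-component of $\tau(q)$ is $q$, is just a rephrasing of the paper's observation that any $z\in F_n$ with $\pi_n(z)=0$ lies in $P_n\cap F_n=0$.
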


\begin{proof}
Let $F_n$ be the free $\Z$-submodule generated by the Burnside basis elements
that do not correspond to Young coset spaces. Then
\[
  \Omega(S_n)=P_n\oplus F_n.
\]

The composite
\[
  \sigma_n\circ\ell_{S_n}:
  \Omega(S_n)\longrightarrow P_n
\]
is a $\Z$-module homomorphism. Hence the map
\[
  \pi_n:\Omega(S_n)\longrightarrow\Omega(S_n),
  \qquad
  \pi_n(x)
  :=
  x-\sigma_n\bigl(\ell_{S_n}(x)\bigr)
\]
is a $\Z$-module homomorphism.

Moreover,
\[
  \ell_{S_n}\bigl(\pi_n(x)\bigr)=0,
\]
so the image of $\pi_n$ is contained in $K(S_n)$. We therefore regard $\pi_n$
as a map
\[
  \pi_n:\Omega(S_n)\longrightarrow K(S_n).
\]

If $k\in K(S_n)$, then
\[
  \pi_n(k)=k,
\]
whereas $\pi_n(p)=0$ for every $p\in P_n$. Conversely, if $\pi_n(x)=0$, then
$x=\sigma_n(\ell_{S_n}(x))\in P_n$. Thus $\pi_n$ is a projection onto
$K(S_n)$ with kernel $P_n$.

We first show that $\pi_n|_{F_n}$ is injective. Suppose that $z\in F_n$ and
$\pi_n(z)=0$. Then
\[
  z=\sigma_n\bigl(\ell_{S_n}(z)\bigr)\in P_n.
\]
Consequently,
\[
  z\in P_n\cap F_n=0,
\]
and hence $z=0$.

We next show that $\pi_n|_{F_n}$ is surjective. Let
$k\in K(S_n)\subseteq\Omega(S_n)$ and write
\[
  k=p+z
  \qquad
  (p\in P_n,\ z\in F_n).
\]
Since $\ell_{S_n}(k)=0$, we have
\[
  \ell_{S_n}(p)=-\ell_{S_n}(z).
\]
Since $\sigma_n$ is the inverse of $\ell_{S_n}|_{P_n}$, it follows that
\[
  p=-\sigma_n\bigl(\ell_{S_n}(z)\bigr).
\]
Therefore,
\[
  k=z-\sigma_n\bigl(\ell_{S_n}(z)\bigr)=\pi_n(z).
\]
Thus
\[
  \pi_n|_{F_n}:F_n\xrightarrow{\sim}K(S_n)
\]
is an isomorphism of $\Z$-modules. For every subgroup $H\le S_n$ that is not
conjugate to a Young subgroup,
\[
  \pi_n([S_n/H])=\Theta_H.
\]
Hence the images of the standard basis of $F_n$ are precisely the elements
$\Theta_H$, and these elements form a $\Z$-basis of $K(S_n)$.
\end{proof}

\subsection{Young-induced relations and intransitive subgroups}
\label{subsec:young-induced-relations}

Define the $\Z$-submodule generated by the Brauer relations induced from proper
Young subgroups by
\begin{equation}\label{eq:young-induced-lattice}
  J_n^{\mathrm Y}
  :=
  \sum_{\substack{L<S_n\\ L\text{ is a Young subgroup}}}
  \Ind_L^{S_n}K(L)
  \subseteq K(S_n).
\end{equation}
The final inclusion follows from the naturality of the linearization map.

\begin{definition}\label{def:intransitive-subgroup}
A subgroup $H\le S_n$ is called \emph{intransitive} if its natural action on
$\{1,\ldots,n\}$ is intransitive.
\end{definition}

Let $H\le S_n$ be intransitive, and write its orbit decomposition as
\[
  \{1,\ldots,n\}
  =
  \mathcal O_1\sqcup\cdots\sqcup\mathcal O_r
  \qquad
  (r\ge2).
\]
Then
\[
  L_H
  :=
  S_{\mathcal O_1}\times\cdots\times S_{\mathcal O_r}
  \le S_n
\]
is a proper Young subgroup and $H\le L_H$.

Now let $L<S_n$ be a proper Young subgroup. First,
\[
  \Ind_L^{S_n}(P_L)\subseteq P_n.
\]
Indeed, the basis elements of $P_L$ are external products of coset spaces of
Young subgroups in the direct factors of $L$, and their inductions to $S_n$ are
Young coset spaces of $S_n$.

By the naturality of the linearization map,
\[
  \ell_n\circ
  \left(\Ind_L^{S_n}|_{P_L}\right)
  =
  \Ind_L^{S_n}\circ
  \left(\ell_L|_{P_L}\right).
\]
Here $\ell_L|_{P_L}$ and $\ell_n$ are isomorphisms, with inverses $\sigma_L$
and $\sigma_n$, respectively. Therefore,
\begin{equation}\label{eq:section-compatible}
  \Ind_L^{S_n}\circ\sigma_L
  =
  \sigma_n\circ\Ind_L^{S_n}.
\end{equation}

For such an $L$, define
\[
  \pi_L:\Omega(L)\longrightarrow K(L),
  \qquad
  \pi_L(x)
  :=
  x-\sigma_L\bigl(\ell_L(x)\bigr).
\]
Then $\pi_L(x)\in K(L)$.

Equation~\eqref{eq:section-compatible} and the naturality of the linearization
map imply that
\begin{equation}\label{eq:pi-induction-compatible}
  \Ind_L^{S_n}\circ\pi_L
  =
  \pi_n\circ\Ind_L^{S_n}.
\end{equation}
Indeed, for $x\in\Omega(L)$,
\[
\begin{aligned}
  \Ind_L^{S_n}\bigl(\pi_L(x)\bigr)
  &=
  \Ind_L^{S_n}(x)
  -
  \Ind_L^{S_n}
  \sigma_L\bigl(\ell_L(x)\bigr)\\
  &=
  \Ind_L^{S_n}(x)
  -
  \sigma_n
  \Ind_L^{S_n}\bigl(\ell_L(x)\bigr)\\
  &=
  \Ind_L^{S_n}(x)
  -
  \sigma_n
  \ell_{S_n}\bigl(\Ind_L^{S_n}(x)\bigr)\\
  &=
  \pi_n\bigl(\Ind_L^{S_n}(x)\bigr).
\end{aligned}
\]

\begin{proposition}\label{prop:intransitive-in-Y}
For every intransitive subgroup $H\le S_n$,
\[
  \Theta_H\in J_n^{\mathrm Y}.
\]
\end{proposition}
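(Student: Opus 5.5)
The plan is to realize $\Theta_H$ directly as an induced relation from the proper Young subgroup $L_H$ defined above, using the compatibility \eqref{eq:pi-induction-compatible} between the projections $\pi_L$ and $\pi_n$. Since $H$ is intransitive, its orbit decomposition has $r\ge2$ orbits, so $L:=L_H=S_{\mathcal O_1}\times\cdots\times S_{\mathcal O_r}$ is a proper Young subgroup, up to conjugation by a permutation carrying the orbits to consecutive intervals. It contains $H$. Conjugating $H$ changes neither the class $[S_n/H]$ nor $\Theta_H$, and $J_n^{\mathrm Y}$ is unchanged as well, because it is a sum over all proper Young subgroups and induction is compatible with conjugation ($\Ind_{gLg^{-1}}^{S_n}K(gLg^{-1})=\Ind_L^{S_n}K(L)$). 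We may therefore assume that $L$ is a standard Young subgroup $S_\lambda$ with $\lambda\ne(n)$.

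The key computation proceeds as follows. Consider $[L/H]\in\Omega(L)$. By \eqref{eq:induction}, $\Ind_L^{S_n}[L/H]=[S_n/H]$. Applying \eqref{eq:pi-induction-compatible} gives
\[
  \Ind_L^{S_n}\bigl(\pi_L([L/H])\bigr)=\pi_n\bigl(\Ind_L^{S_n}[L/H]\bigr)=\pi_n([S_n/H])=[S_n/H]-\sigma_n\bigl(\ell_{S_n}([S_n/H])\bigr)=\Theta_H.
\]
Since $\pi_L([L/H])\in K(L)$, as noted when $\pi_L$ was defined, it follows that $\Theta_H\in\Ind_L^{S_n}K(L)\subseteq J_n^{\mathrm Y}$. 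This identity holds even when $H$ is itself conjugate to a Young subgroup; in that case $\Theta_H=0$, which is trivially in $J_n^{\mathrm Y}$.

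The only point requiring care is the setup underlying \eqref{eq:pi-induction-compatible}: $\sigma_L$ must be well defined, meaning that $\ell_L|_{P_L}$ is an isomorphism for a product of symmetric groups. The paper has already asserted this by the same Kostka argument, since $R_\Q(L)\cong\bigotimes R_\Q(S_{\lambda_i})$ and $P_L\cong\bigotimes P_{\lambda_i}$. It also requires $\Ind_L^{S_n}(P_L)\subseteq P_n$, which holds because the induction of $[L/(S_{\mu^1}\times\cdots\times S_{\mu^r})]$ is the coset space of a Young subgroup of $S_n$. Both facts are stated before the proposition, so the main obstacle reduces to the conjugation bookkeeping that places $L_H$ in standard form. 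That step is routine.
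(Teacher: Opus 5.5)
Your proof is correct and is essentially the paper's argument: you write $[S_n/H]=\Ind_{L_H}^{S_n}[L_H/H]$ and apply \eqref{eq:pi-induction-compatible} to obtain $\Theta_H=\Ind_{L_H}^{S_n}\bigl(\pi_{L_H}([L_H/H])\bigr)\in\Ind_{L_H}^{S_n}K(L_H)\subseteq J_n^{\mathrm Y}$. Your extra conjugation step, which reduces $L_H$ to a standard $S_\lambda$, is harmless and makes explicit a point the paper leaves implicit.
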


\begin{proof}
Let $L_H<S_n$ be the proper Young subgroup corresponding to the orbit
decomposition of $H$. Then $H\le L_H$ and
\[
  [S_n/H]
  =
  \Ind_{L_H}^{S_n}[L_H/H].
\]
Hence, by \eqref{eq:pi-induction-compatible},
\[
\begin{aligned}
  \Theta_H
  &=
  \pi_n([S_n/H])\\
  &=
  \pi_n\left(
    \Ind_{L_H}^{S_n}[L_H/H]
  \right)\\
  &=
  \Ind_{L_H}^{S_n}
  \left(
    \pi_{L_H}([L_H/H])
  \right).
\end{aligned}
\]
Since
\[
  \pi_{L_H}([L_H/H])\in K(L_H),
\]
we obtain
\[
  \Theta_H
  \in
  \Ind_{L_H}^{S_n}K(L_H)
  \subseteq
  J_n^{\mathrm Y}.
\]
\end{proof}

\subsection{Action-imprimitive subgroups and standard wreath subgroups}
\label{subsec:action-imprimitive}

\begin{definition}\label{def:block-system}
Let $H\le S_n$. A partition
\[
  \mathcal B=\{B_1,\ldots,B_b\}
\]
of $\{1,\ldots,n\}$ is called a \emph{block system} for $H$ if $H$ permutes
the blocks of $\mathcal B$, that is,
\[
  h\mathcal B=\mathcal B
  \qquad
  (h\in H),
\]
where
\[
  h\mathcal B
  :=
  \{hB_1,\ldots,hB_b\}.
\]

The block systems
\[
  \bigl\{\{1,\ldots,n\}\bigr\}
  \qquad\text{and}\qquad
  \bigl\{\{1\},\ldots,\{n\}\bigr\}
\]
are called \emph{trivial}; every other block system is called
\emph{nontrivial}.

The stabilizer of $\mathcal B$ in $S_n$ is defined by
\[
  \operatorname{Stab}_{S_n}(\mathcal B)
  :=
  \{g\in S_n\mid g\mathcal B=\mathcal B\}.
\]
\end{definition}

\begin{definition}\label{def:action-imprimitive-subgroup}
A subgroup $H\le S_n$ is called \emph{transitive imprimitive} if it acts
transitively on $\{1,\ldots,n\}$ and preserves a nontrivial block system.

For brevity, a subgroup $H\le S_n$ will be called \emph{action-imprimitive}
if it is either intransitive or transitive imprimitive. This term is used only
as an abbreviation within the present paper. We denote the set of all such
subgroups by $\mathcal H_n^{\mathrm{np}}$.
\end{definition}

By Proposition~\ref{prop:standard-basis}, the standard relations associated
with action-imprimitive subgroups that are not conjugate to Young subgroups
are linearly independent. Accordingly, the Young--wreath lattice is
\begin{equation}\label{eq:nonprimitive-lattice}
  N_n
  :=
  \bigoplus_{\substack{
    (H)\in\operatorname{Sub}(S_n)^c\\
    H\in\mathcal H_n^{\mathrm{np}}\\
    H\text{ is not conjugate to a Young subgroup}
  }}
  \Z\Theta_H
  \subseteq K(S_n).
\end{equation}
Equivalently, the displayed elements $\Theta_H$ form a $\Z$-basis of $N_n$.

Let $n=ab$ with $a,b>1$. Fix a partition
\[
  \mathcal B_{a,b}
  =
  \{B_1,\ldots,B_b\},
  \qquad
  |B_i|=a
  \quad
  (1\le i\le b),
\]
of $\{1,\ldots,n\}$. Then $\mathcal B_{a,b}$ is a nontrivial block system.
We call its stabilizer
\[
  W_{a,b}
  :=
  \operatorname{Stab}_{S_n}(\mathcal B_{a,b})
\]
the \emph{standard wreath subgroup}. We have
\[
  W_{a,b}\cong S_a\wr S_b.
\]
Any two such partitions are conjugate under $S_n$, so the conjugacy class of
$W_{a,b}$ is independent of the chosen partition.

Suppose that a transitive imprimitive subgroup $H\le S_n$ preserves a block
system with blocks of size $a$ and with $b$ blocks. Then $n=ab$ with $a,b>1$,
and there exists $g\in S_n$ such that
\[
  gHg^{-1}\le W_{a,b}.
\]

In addition to the relations induced from proper Young subgroups, define the
$\Z$-submodule generated by the Brauer relations induced from standard wreath
subgroups by
\[
  J_n^{\mathrm W}
  :=
  \sum_{\substack{ab=n\\a,b>1}}
  \Ind_{W_{a,b}}^{S_n}K(W_{a,b})
  \subseteq K(S_n),
\]
and set
\begin{equation}\label{eq:young-wreath-induced-lattice}
  J_n
  :=
  J_n^{\mathrm Y}+J_n^{\mathrm W}
  \subseteq K(S_n).
\end{equation}

\begin{lemma}\label{lem:all-nonprimitive-induction}
If $H\in\mathcal H_n^{\mathrm{np}}$, then
\[
  \Ind_H^{S_n}K(H)\subseteq J_n.
\]
\end{lemma}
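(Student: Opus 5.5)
The natural strategy is transitivity of induction together with the fact that $K$ is a biset subfunctor of $\Omega$. Every action-imprimitive $H$ is, up to conjugacy, contained in one of the subgroups whose relations generate $J_n$. Concretely, split into the two cases of Definition~\ref{def:action-imprimitive-subgroup}.

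If $H$ is intransitive, let $L_H<S_n$ be the proper Young subgroup attached to its orbit decomposition, as in Section~\ref{subsec:young-induced-relations}. Then $H\le L_H$, and
\[
  \Ind_H^{S_n}K(H)=\Ind_{L_H}^{S_n}\Ind_H^{L_H}K(H)\subseteq\Ind_{L_H}^{S_n}K(L_H)\subseteq J_n^{\mathrm Y}.
\]
The middle inclusion holds because induction sends Brauer relations to Brauer relations (Section~\ref{subsec:primitive-prelim}). Strictly, $L_H$ is a conjugate of a standard Young subgroup $S_\lambda$ with $\lambda\ne(n)$, so one needs the conjugation step below.

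If $H$ is transitive imprimitive, then by the remark preceding the definition of $J_n^{\mathrm W}$ there are $a,b>1$ with $ab=n$ and $g\in S_n$ such that $gHg^{-1}\le W_{a,b}$. The same transitivity argument gives
\[
  \Ind_{gHg^{-1}}^{S_n}K(gHg^{-1})\subseteq\Ind_{W_{a,b}}^{S_n}K(W_{a,b})\subseteq J_n^{\mathrm W}.
\]

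The remaining step, which is the only genuinely nontrivial bookkeeping, is invariance under conjugation. We must show that $\Ind_H^{S_n}K(H)=\Ind_{gHg^{-1}}^{S_n}K(gHg^{-1})$. Conjugation by $g$ is an isomorphism $c_g:\Omega(H)\to\Omega(gHg^{-1})$ given by an invertible biset, so it maps $K(H)$ onto $K(gHg^{-1})$. Moreover $\Ind_{gHg^{-1}}^{S_n}\circ c_g=\Ind_H^{S_n}$, because on basis elements
\[
  [S_n/gKg^{-1}]=[S_n/K]
\]
as $S_n$-sets via $x\,gKg^{-1}\mapsto xgK$. Equivalently, the bisets ${}_{S_n}(S_n)_{gHg^{-1}}\times_{gHg^{-1}}{}_{gHg^{-1}}(gHg^{-1})^{c_g}_H$ and ${}_{S_n}(S_n)_H$ are isomorphic. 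This handles both the Young case (conjugates of $S_\lambda$) and the wreath case. Combining the two cases yields $\Ind_H^{S_n}K(H)\subseteq J_n^{\mathrm Y}+J_n^{\mathrm W}=J_n$. I expect no real obstacle beyond stating this conjugation compatibility cleanly.
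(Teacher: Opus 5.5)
Your proof is correct and follows the paper's argument. The paper also places $H$ inside the proper Young subgroup $L_H$ of its orbit partition, or inside a conjugate of some $W_{a,b}$, and then uses transitivity of induction together with $\Ind K\subseteq K$. Your explicit verification that $\Ind_H^{S_n}K(H)=\Ind_{gHg^{-1}}^{S_n}K(gHg^{-1})$ makes rigorous two steps the paper leaves implicit: the phrase ``after replacing $H$ by a conjugate'', and the identification of $L_H$ with a standard $S_\lambda$.
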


\begin{proof}
Suppose first that $H$ is intransitive. Let $L<S_n$ be the proper Young
subgroup corresponding to the orbit decomposition of $H$. Then $H\le L$.
By the naturality of the linearization map,
\[
  \Ind_H^L K(H)\subseteq K(L).
\]
The transitivity of induction therefore gives
\[
\begin{aligned}
  \Ind_H^{S_n}K(H)
  &=
  \Ind_L^{S_n}\bigl(\Ind_H^L K(H)\bigr)\\
  &\subseteq
  \Ind_L^{S_n}K(L)
  \subseteq
  J_n^{\mathrm Y}
  \subseteq
  J_n.
\end{aligned}
\]

Suppose next that $H$ is transitive imprimitive. After replacing $H$ by a
conjugate, we may assume that $H\le W_{a,b}$ for some factorization $n=ab$ with
$a,b>1$. Similarly,
\[
  \Ind_H^{W_{a,b}}K(H)\subseteq K(W_{a,b}),
\]
and hence
\[
\begin{aligned}
  \Ind_H^{S_n}K(H)
  &=
  \Ind_{W_{a,b}}^{S_n}
  \bigl(\Ind_H^{W_{a,b}}K(H)\bigr)\\
  &\subseteq
  \Ind_{W_{a,b}}^{S_n}K(W_{a,b})
  \subseteq
  J_n^{\mathrm W}
  \subseteq
  J_n.
\end{aligned}
\]
\end{proof}

\begin{lemma}\label{lem:J-in-N}
We have $J_n\subseteq N_n$.
\end{lemma}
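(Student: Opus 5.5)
We show that each generator of $J_n$ lies in $N_n$, by pushing the generators through the projection $\pi_n$ of Proposition~\ref{prop:standard-basis}. Since $J_n$ is generated by elements $\Ind_L^{S_n}(k)$ with $k\in K(L)$ and $L$ either a proper Young subgroup or some $W_{a,b}$, it suffices to show $\Ind_L^{S_n}K(L)\subseteq N_n$ for each such $L$.

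Fix such an $L$ and $k\in K(L)$. Expand $k=\sum_{(H)_L} m_H[L/H]$ in the Burnside basis of $\Omega(L)$, where $H$ runs over representatives of $L$-conjugacy classes of subgroups of $L$. Then
\[
  \Ind_L^{S_n}(k)=\sum m_H[S_n/H]\in K(S_n).
\]
Since $\pi_n$ is the identity on $K(S_n)$,
\[
  \Ind_L^{S_n}(k)=\pi_n\bigl(\Ind_L^{S_n}k\bigr)=\sum m_H\,\pi_n([S_n/H]).
\]
This step only uses linearity of $\pi_n$ and does not need \eqref{eq:pi-induction-compatible}.

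Now look at the terms $\pi_n([S_n/H])$ one at a time.
\begin{itemize}
\item If $H$ is $S_n$-conjugate to a Young subgroup, then $[S_n/H]\in P_n=\ker\pi_n$, so the term vanishes.
\item Otherwise $\pi_n([S_n/H])=\Theta_H$, which equals $\Theta_{H'}$ for the chosen representative $H'$ of the $S_n$-class of $H$, because $\Theta$ depends only on $[S_n/H]$.
\end{itemize}
So it remains to check that every $H\le L$ lies in $\mathcal H_n^{\mathrm{np}}$. Note that $\mathcal H_n^{\mathrm{np}}$ is closed under conjugation.
\begin{itemize}
\item If $L$ is a proper Young subgroup, then $H\le L$ preserves each orbit of $L$, and there are at least two such orbits. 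Hence $H$ is intransitive.
\item If $L=W_{a,b}$, then $H$ permutes the blocks of the nontrivial block system $\mathcal B_{a,b}$. Either $H$ is intransitive, or $H$ is transitive and preserves a nontrivial block system, so $H$ is transitive imprimitive.
\end{itemize}
In both cases $H\in\mathcal H_n^{\mathrm{np}}$. Therefore $\Ind_L^{S_n}(k)$ is an integral combination of basis elements $\Theta_{H'}$ of $N_n$, and hence lies in $N_n$.

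The main point to watch is bookkeeping rather than a real obstacle. Several $L$-classes can fuse into one $S_n$-class, and the expression must be read in the basis indexed by $S_n$-classes. Since $\Theta_H$ depends only on the $S_n$-class of $H$, fused terms simply add coefficients, and the integrality is preserved.
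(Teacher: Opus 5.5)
Your proof is correct and takes essentially the same route as the paper. Both expand $\Ind_L^{S_n}(k)$ in the Burnside basis, observe that every subgroup of a proper Young subgroup or of $W_{a,b}$ is action-imprimitive, and pass to the $\Theta$-basis using that $P_n$ does not affect non-Young coordinates. The only difference is bookkeeping: the paper reads off the $\Theta_H$-coefficients as Burnside coefficients via the uniqueness in Proposition~\ref{prop:standard-basis}, whereas you obtain the expansion directly by applying the projection $\pi_n$ termwise.
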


\begin{proof}
Let $L$ be either a proper Young subgroup or a standard wreath subgroup
$W_{a,b}$, and let $y\in\Ind_L^{S_n}K(L)$. There exists $z\in K(L)$ such that
\[
  y=\Ind_L^{S_n}(z).
\]
Expand $z$ in the Burnside basis of $\Omega(L)$ as
\[
  z
  =
  \sum_{(H)\in\operatorname{Sub}(L)^c}
  a_H[L/H]
  \qquad
  (a_H\in\Z).
\]
By the definition of induction,
\[
\begin{aligned}
  y
  &=
  \Ind_L^{S_n}(z)\\
  &=
  \sum_{(H)\in\operatorname{Sub}(L)^c}
  a_H\Ind_L^{S_n}[L/H]\\
  &=
  \sum_{(H)\in\operatorname{Sub}(L)^c}
  a_H[S_n/H].
\end{aligned}
\]
If distinct $L$-conjugacy classes become conjugate in $S_n$, we combine the
corresponding terms. It follows that, for every Burnside basis element
$[S_n/K]$ occurring with nonzero coefficient in $y$, the subgroup $K$ is
$S_n$-conjugate to a subgroup of $L$.

If $L$ is a proper Young subgroup, then $L$ fixes setwise every part of a
nontrivial partition of $\{1,\ldots,n\}$. Hence every subgroup of $L$ is
intransitive. If $L=W_{a,b}$, every subgroup of $L$ preserves the nontrivial
block system $\mathcal B_{a,b}$ and is therefore either intransitive or
transitive imprimitive. Thus every subgroup $K$ corresponding to a Burnside
basis element occurring in $y$ is action-imprimitive.

On the other hand, Proposition~\ref{prop:standard-basis} gives the unique
standard-relation expansion
\[
  y
  =
  \sum_{\substack{(H)\in\operatorname{Sub}(S_n)^c\\
          H\text{ is not a Young subgroup}}}
  m_H\Theta_H.
\]
For every non-Young subgroup $H$, write
\[
  \Theta_H=[S_n/H]+p_H
  \qquad
  (p_H\in P_n).
\]
Since $p_H$ is a linear combination of Young basis elements, it does not
contribute to the coefficient of any non-Young Burnside basis element.
Consequently, $m_H$ is exactly the coefficient of $[S_n/H]$ in the Burnside
basis expansion of $y$.

It follows that $m_H=0$ whenever the natural action of $H$ is primitive, and
therefore $y\in N_n$. Applying this argument to every summand defining $J_n$
gives
\[
  J_n\subseteq N_n.
\]
\end{proof}

\section{The \texorpdfstring{$n$}{n}-cycle mark obstruction}\label{sec:ncycle}

\subsection{The fixed-coset formula}
\label{subsec:fixed-coset-formula}

Put
\[
  c=(1\ 2\ \cdots\ n)\in S_n,
  \qquad
  C_n:=\langle c\rangle.
\]

Define
\begin{equation}\label{eq:omega-definition}
  \omega_n:K(S_n)\longrightarrow\Z,
  \qquad
  \omega_n(x):=-\coeff_{[S_n/S_n]}(x).
\end{equation}
We call this $\Z$-module homomorphism the
\emph{$n$-cycle mark obstruction}.

Applying \eqref{eq:fixed-coset-formula} with $G=S_n$ and $K=C_n$, we obtain
\[
  \phi_{C_n}([S_n/H])
  =
  |(S_n/H)^{C_n}|
  =
  \frac{1}{|H|}
  \left|
    \left\{
      g\in S_n
      \mathrel{}\middle|\mathrel{}
      g^{-1}C_ng\le H
    \right\}
  \right|
\]
for every subgroup $H\le S_n$.

Moreover, the value of a permutation character is the number of fixed points
of the corresponding permutation action. Since $C_n=\langle c\rangle$, we have
\begin{equation}\label{eq:permutation-character-fixed-points}
  \chi_{\Q[S_n/H]}(c)
  =
  |(S_n/H)^{\langle c\rangle}|
  =
  |(S_n/H)^{C_n}|
  =
  \phi_{C_n}([S_n/H]).
\end{equation}

\begin{proposition}\label{prop:fixed-coset}
Let $H\le S_n$ be a subgroup that is not conjugate to a Young subgroup. Then
\[
  \omega_n(\Theta_H)
  =
  \phi_{C_n}([S_n/H])
  =
  |(S_n/H)^{C_n}|.
\]
\end{proposition}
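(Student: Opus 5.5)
Since $H$ is not conjugate to a Young subgroup, the coefficient of $[S_n/S_n]$ in $\Theta_H=[S_n/H]-\sigma_n(\ell_{S_n}[S_n/H])$ comes entirely from the Young part. Because $S_n=S_{(n)}$ is a Young subgroup, $[S_n/H]$ contributes nothing to it. Hence
\[
  \omega_n(\Theta_H)=\coeff_{[S_n/S_n]}\bigl(\sigma_n(\chi)\bigr),\qquad \chi:=\ell_{S_n}([S_n/H])=\chi_{\Q[S_n/H]} .
\]
The task therefore reduces to identifying the linear functional $\psi\mapsto\coeff_{[S_n/S_n]}(\sigma_n(\psi))$ on $R_{\Q}(S_n)$ with evaluation at the $n$-cycle $c$. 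Once that is done, \eqref{eq:permutation-character-fixed-points} gives $\chi(c)=\phi_{C_n}([S_n/H])$, and the proof is complete.

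To identify this functional, write $\sigma_n(\psi)=\sum_{\lambda\vdash n}a_\lambda[S_n/S_\lambda]$, so that $\psi=\sum_\lambda a_\lambda\,\chi_{\Q[S_n/S_\lambda]}$. Evaluate at $c$. The permutation character $\chi_{\Q[S_n/S_\lambda]}(c)$ counts the ordered set partitions of type $\lambda$ fixed by $c$. Since $\langle c\rangle$ acts transitively on $\{1,\ldots,n\}$, every $c$-invariant block must be all of $\{1,\ldots,n\}$. So this count is $0$ unless $\lambda=(n)$, and it is $1$ when $\lambda=(n)$. Equivalently, $\phi_{C_n}([S_n/S_\lambda])=0$ for $\lambda\ne(n)$, because no conjugate of $C_n$ lies in an intransitive subgroup. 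Hence $\psi(c)=a_{(n)}=\coeff_{[S_n/S_n]}(\sigma_n(\psi))$.

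Combining the two steps gives the required sign:
\[
  \omega_n(\Theta_H)=-\coeff_{[S_n/S_n]}(\Theta_H)=a_{(n)}=\chi(c)=|(S_n/H)^{C_n}|,
\]
since $\Theta_H$ contributes the coefficient $-a_{(n)}$. I do not expect a genuine obstacle here. The one point needing care is the fixed-point computation for Young coset spaces. It can be checked cleanly via \eqref{eq:fixed-coset-formula}: $g^{-1}C_ng\le S_\lambda$ would force $S_\lambda$ to contain a transitive subgroup, which is impossible when $\lambda$ has at least two parts.
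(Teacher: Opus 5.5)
Your proof is correct and takes essentially the same route as the paper. You expand $\sigma_n(\ell_{S_n}[S_n/H])=\sum_\lambda a_\lambda[S_n/S_\lambda]$ and evaluate at $c$, using $\phi_{C_n}([S_n/S_\lambda])=0$ for $\lambda\ne(n)$, to get $a_{(n)}=|(S_n/H)^{C_n}|$; you then observe that $[S_n/H]$ does not contribute to the $[S_n/S_n]$-coefficient because $H\ne S_n$. Phrasing this as identifying the functional $\psi\mapsto\coeff_{[S_n/S_n]}(\sigma_n(\psi))$ with evaluation at $c$ is only a slightly more general packaging of the same computation.
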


\begin{proof}
Write the expansion with respect to the Young section as
\[
  \sigma_n\bigl(\ell_{S_n}([S_n/H])\bigr)
  =
  \sum_{\lambda\vdash n}
  a_{H,\lambda}[S_n/S_\lambda].
\]

If $\lambda\ne(n)$, then $S_\lambda$ is a proper Young subgroup and its
natural action is intransitive. On the other hand, $C_n$ acts transitively on
$\{1,\ldots,n\}$. Hence no conjugate of $C_n$ is contained in $S_\lambda$, and
\eqref{eq:fixed-coset-formula} gives
\[
  \phi_{C_n}([S_n/S_\lambda])=0
  \qquad
  (\lambda\ne(n)).
\]
Since $S_{(n)}=S_n$, we also have
\[
  \phi_{C_n}([S_n/S_{(n)}])
  =
  \phi_{C_n}([S_n/S_n])
  =
  1.
\]

After applying the linearization map and evaluating the resulting character at
$c$, we obtain
\[
\begin{aligned}
  |(S_n/H)^{C_n}|
  &=
  \chi_{\Q[S_n/H]}(c)\\
  &=
  \sum_{\lambda\vdash n}
  a_{H,\lambda}
  \chi_{\Q[S_n/S_\lambda]}(c)\\
  &=
  a_{H,(n)}.
\end{aligned}
\]

On the other hand,
\[
\begin{aligned}
  \Theta_H
  &=
  [S_n/H]
  -
  \sigma_n\bigl(\ell_{S_n}([S_n/H])\bigr)\\
  &=
  [S_n/H]
  -
  \sum_{\lambda\vdash n}
  a_{H,\lambda}[S_n/S_\lambda].
\end{aligned}
\]
Since $H$ is not conjugate to a Young subgroup,
\[
  \coeff_{[S_n/S_n]}(\Theta_H)=-a_{H,(n)}.
\]
Hence, by \eqref{eq:omega-definition},
\[
  \omega_n(\Theta_H)
  =
  a_{H,(n)}
  =
  |(S_n/H)^{C_n}|
  =
  \phi_{C_n}([S_n/H]).
\]
\end{proof}

\subsection{Vanishing on induced relations and surjectivity}

\begin{lemma}\label{lem:omega-J}
We have $J_n\subseteq\ker\omega_n$.
\end{lemma}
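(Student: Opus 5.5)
The plan is to express $\omega_n$ as a composite of natural maps and then check that each generator of $J_n$ is killed by it. First, extend $\omega_n$ to all of $K(S_n)$ in a mark-theoretic form. For $x\in K(S_n)$, expand $x$ in the standard basis of Proposition~\ref{prop:standard-basis} as $x=\sum m_H\Theta_H$. By Proposition~\ref{prop:fixed-coset},
\[
  \omega_n(x)=\sum m_H\,\phi_{C_n}([S_n/H]).
\]
This expression must be compared with $\phi_{C_n}(x)$. We have $\Theta_H=[S_n/H]+p_H$ with $p_H\in P_n$, and $\phi_{C_n}$ kills every proper Young coset space. Hence
\[
  \phi_{C_n}(\Theta_H)=\phi_{C_n}([S_n/H])+\coeff_{[S_n/S_n]}(p_H)=\phi_{C_n}([S_n/H])-\omega_n(\Theta_H)=0,
\]
which is consistent, since $\phi_{C_n}$ vanishes on $K(S_n)$ because marks of cyclic subgroups are character values. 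So $\phi_{C_n}$ itself does not detect $\omega_n$. Instead I will use the identity
\[
  \omega_n(x)=\phi_{C_n}\bigl(x-\coeff_{[S_n/S_n]}(x)[S_n/S_n]\bigr)
  =\sum_{(K)\ne(S_n)}\coeff_{[S_n/K]}(x)\,\phi_{C_n}([S_n/K]),
\]
which follows from $\phi_{C_n}(x)=0$ and $\phi_{C_n}([S_n/S_n])=1$. Thus $\omega_n(x)$ equals the $C_n$-mark of the non-trivial part of $x$.

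Next, take a generator $y=\Ind_L^{S_n}(z)$ with $z\in K(L)$, where $L$ is a proper Young subgroup or some $W_{a,b}$. In the Burnside expansion of $y$ every occurring $[S_n/K]$ has $K$ conjugate into $L$, as shown in the proof of Lemma~\ref{lem:J-in-N}, so in particular $K\ne S_n$. Therefore $\coeff_{[S_n/S_n]}(y)=0$, and $\omega_n(y)=0$ directly from the definition \eqref{eq:omega-definition}. This settles the claim at once: since $L$ is a proper subgroup, $[S_n/S_n]$ cannot occur in $\Ind_L^{S_n}$ of anything, and so $J_n\subseteq\ker\omega_n$ by linearity over all generators.

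The only step needing care, and the one I would regard as the main obstacle, is justifying that induction from a proper subgroup $L$ produces no $[S_n/S_n]$ term even after like terms are combined. This follows from \eqref{eq:induction}: $\Ind_L^{S_n}[L/H]=[S_n/H]$ with $H\le L<S_n$, and a proper subgroup is never conjugate to $S_n$. The mark-theoretic detour above is therefore unnecessary for this lemma. I would present only the coefficient argument, reserving the identification of $\omega_n$ with $C_n$-marks for the surjectivity and kernel statements.
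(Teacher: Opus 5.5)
Your proof is correct and is essentially the paper's argument: expand $z\in K(L)$ in the Burnside basis of $\Omega(L)$, use $\Ind_L^{S_n}[L/H]=[S_n/H]$ with $H\le L<S_n$ to see that $[S_n/S_n]$ never occurs (even after combining terms), and conclude $\omega_n(y)=0$ from the definition and additivity over the generators of $J_n$. The mark-theoretic identity in your first paragraph is also correct, since $\phi_{C_n}$ vanishes on $K(S_n)$, but as you note it plays no role in this lemma.
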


\begin{proof}
Let $L$ be either a proper Young subgroup or a standard wreath subgroup
$W_{a,b}$, and let
\[
  y\in\Ind_L^{S_n}K(L).
\]
There exists $z\in K(L)$ such that
\[
  y=\Ind_L^{S_n}(z).
\]

Expand $z$ in the Burnside basis of $\Omega(L)$ as
\[
  z
  =
  \sum_{(H)\in\operatorname{Sub}(L)^c}
  a_H[L/H]
  \qquad
  (a_H\in\Z).
\]
By the definition of induction,
\[
  y
  =
  \sum_{(H)\in\operatorname{Sub}(L)^c}
  a_H[S_n/H].
\]
Since $H\le L<S_n$, every such $H$ is a proper subgroup of $S_n$. Even after
combining terms whose $L$-conjugacy classes become conjugate in $S_n$, the
basis element $[S_n/S_n]$ does not occur in this expansion. Hence
\[
  \coeff_{[S_n/S_n]}(y)=0,
  \qquad
  \omega_n(y)=0.
\]
Applying this argument to every generating submodule of $J_n^{\mathrm Y}$ and
$J_n^{\mathrm W}$ gives
\[
  J_n\subseteq\ker\omega_n.
\]
\end{proof}

\begin{lemma}\label{lem:wreath-one}
Let $n=ab$ with $a,b>1$. Then
\[
  \omega_n(\Theta_{W_{a,b}})
  =
  |(S_n/W_{a,b})^{C_n}|
  =
  1.
\]
\end{lemma}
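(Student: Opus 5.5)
Since $W_{a,b}$ is transitive, it is not conjugate to a Young subgroup: every proper Young subgroup is intransitive, and $W_{a,b}\ne S_n$. Hence Proposition~\ref{prop:fixed-coset} applies and gives $\omega_n(\Theta_{W_{a,b}})=|(S_n/W_{a,b})^{C_n}|$. It remains to show that this number of fixed cosets equals $1$.

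We identify $S_n/W_{a,b}$ with the $S_n$-set of all partitions of $\{1,\ldots,n\}$ into $b$ blocks of size $a$. The coset $gW_{a,b}$ corresponds to $g\mathcal B_{a,b}$, and since $W_{a,b}$ is the stabilizer of $\mathcal B_{a,b}$ and $S_n$ acts transitively on such partitions, this identification is an $S_n$-equivariant bijection. So the task is to count partitions $\mathcal B$ into $b$ blocks of size $a$ with $c\mathcal B=\mathcal B$.

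\textbf{Existence.} The residue-class partition works:
\[
  \mathcal B_0=\bigl\{\{i\in\{1,\ldots,n\}: i\equiv r \pmod b\}\ \big|\ r=1,\ldots,b\bigr\}.
\]
Each class has exactly $a$ elements. Since $c$ adds $1$ modulo $n$ and $b\mid n$, $c$ permutes these classes cyclically.

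\textbf{Uniqueness.} This is the main step. Suppose $\mathcal B$ is $c$-stable and let $B\in\mathcal B$ be the block containing $1$. Identify $\{1,\ldots,n\}$ with $\Z/n\Z$, so that $c$ is translation by $1$. Put $T=\{t\in\Z/n\Z : B+t=B\}$, a subgroup of $\Z/n\Z$. Because $C_n$ acts transitively on points and permutes the blocks, it acts transitively on $\mathcal B$. The stabilizer of $B$ in $C_n\cong\Z/n\Z$ is $T$, so the orbit–stabilizer theorem gives $|T|=n/b=a$.

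Now $T$ is the unique subgroup of $\Z/n\Z$ of order $a$, namely $b\Z/n\Z$. Since $B$ is stable under translation by $T$, it is a union of $T$-cosets. As $|B|=a=|T|$, $B$ is a single coset, namely $1+T$, the residue class of $1$ modulo $b$. The remaining blocks are the translates $c^jB$, so $\mathcal B=\mathcal B_0$.

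Therefore $|(S_n/W_{a,b})^{C_n}|=1$, and consequently $\omega_n(\Theta_{W_{a,b}})=1$. The only delicate point is the identification of $S_n/W_{a,b}$ with block systems and the use of transitivity of $C_n$ on the blocks to force $|T|=a$; everything else is routine.
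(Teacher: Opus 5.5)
Your proof is correct and follows the same route as the paper. You apply Proposition~\ref{prop:fixed-coset} (after checking that $W_{a,b}$ is not conjugate to a Young subgroup), identify $S_n/W_{a,b}$ with block systems of $b$ blocks of size $a$, and show that the only $C_n$-stable one is the partition into cosets of the order-$a$ subgroup $b\Z/n\Z$. Your orbit--stabilizer argument forcing $|T|=a$ spells out the uniqueness step that the paper only asserts.
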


\begin{proof}
The coset space $S_n/W_{a,b}$ can be identified with the set of all block
systems on $\{1,\ldots,n\}$ consisting of $b$ blocks of size $a$. The unique
block system of this type preserved by the regular cyclic group $C_n$ is the
partition into the cosets of the unique subgroup of $C_n$ of order $a$.
Consequently, $|(S_n/W_{a,b})^{C_n}|=1$, and the result follows from
Proposition~\ref{prop:fixed-coset}.
\end{proof}

\begin{corollary}\label{cor:surjective}
If $n\ge4$ is composite, then
\[
  \omega_n|_{N_n}:N_n\longrightarrow\Z
\]
is surjective.
\end{corollary}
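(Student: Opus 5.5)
The plan is to exhibit one element of $N_n$ on which $\omega_n$ takes the value $1$; since $\omega_n|_{N_n}$ is a $\Z$-module homomorphism into $\Z$, its image then contains $\Z\cdot 1=\Z$. Because $n\ge4$ is composite, we can write $n=ab$ with $a,b>1$. The natural candidate is the standard relation $\Theta_{W_{a,b}}$, and Lemma~\ref{lem:wreath-one} already computes $\omega_n(\Theta_{W_{a,b}})=1$.

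What has to be checked is that $\Theta_{W_{a,b}}$ really is one of the basis elements of $N_n$ in \eqref{eq:nonprimitive-lattice}. This requires two facts. First, $W_{a,b}$ must lie in $\mathcal H_n^{\mathrm{np}}$. It is transitive, since $S_a^b$ is transitive inside each block and $S_b$ permutes the blocks transitively. It also preserves the block system $\mathcal B_{a,b}$, which is nontrivial because $1<a<n$. Hence $W_{a,b}$ is transitive imprimitive.

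Second, $W_{a,b}$ must not be conjugate to a Young subgroup. Every proper Young subgroup is intransitive, so a conjugate of one is intransitive as well. The only transitive Young subgroup is $S_{(n)}=S_n$, and $W_{a,b}\ne S_n$ because it preserves a nontrivial partition, whereas $S_n$ does not (equivalently, $S_n$ acts primitively). Therefore $W_{a,b}$ is not conjugate to any Young subgroup, and $\Theta_{W_{a,b}}$ is a basis element of $N_n$.

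Neither step is a real obstacle: the essential computation is the fixed-point count already carried out in Lemma~\ref{lem:wreath-one}, and the rest is this membership check. Combining the two facts with Lemma~\ref{lem:wreath-one} gives $1\in\omega_n(N_n)$, so $\omega_n|_{N_n}$ is surjective.
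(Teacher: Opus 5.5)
Your proposal is correct and follows essentially the same approach as the paper: choose $n=ab$ with $a,b>1$, note that $\Theta_{W_{a,b}}\in N_n$, and apply Lemma~\ref{lem:wreath-one} to get $\omega_n(\Theta_{W_{a,b}})=1$. Your extra check that $W_{a,b}$ is not conjugate to a Young subgroup is left implicit in the paper's proof. It is correct, and it is also what licenses the use of Proposition~\ref{prop:fixed-coset} inside Lemma~\ref{lem:wreath-one}.
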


\begin{proof}
Choose a factorization $n=ab$ with $a,b>1$. Since $W_{a,b}$ is transitive
imprimitive, we have $\Theta_{W_{a,b}}\in N_n$. By
Lemma~\ref{lem:wreath-one},
\[
  \omega_n(\Theta_{W_{a,b}})=1.
\]
Hence the image of $\omega_n|_{N_n}$ contains $1$, and the map is surjective.
\end{proof}

\section{The Young--wreath quotient}\label{sec:main-result}

\begin{lemma}\label{lem:projection}
Let $H\in\mathcal H_n^{\mathrm{np}}$ and $x\in K(S_n)$. Then
\[
  [S_n/H]x\in J_n.
\]
\end{lemma}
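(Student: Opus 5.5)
The plan is to use the projection formula \eqref{eq:projection-formula} with $L=H$. Since $[S_n/H]=\Ind_H^{S_n}[H/H]=\Ind_H^{S_n}(\one_H)$, where $\one_H=[H/H]$ is the unit of $\Omega(H)$, we get for every $x\in\Omega(S_n)$
\[
  [S_n/H]\,x
  =\Ind_H^{S_n}(\one_H)\,x
  =\Ind_H^{S_n}\bigl(\one_H\cdot\Res_H^{S_n}(x)\bigr)
  =\Ind_H^{S_n}\bigl(\Res_H^{S_n}(x)\bigr).
\]
This is the only computation needed. If one prefers to avoid the abstract projection formula, the same identity $[S_n/H][X]=\Ind_H^{S_n}\Res_H^{S_n}[X]$ can be verified directly on $S_n$-sets. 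The isomorphism $S_n/H\times X\cong S_n\times_H\Res_H^{S_n}X$, given by $(gH,x)\mapsto[g,g^{-1}x]$, is well defined and $S_n$-equivariant. The relation then extends additively to all of $\Omega(S_n)$.

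Next, take $x\in K(S_n)$. Section~\ref{subsec:primitive-prelim} shows that $K$ is a biset subfunctor of $\Omega$; concretely, $\ell_H\circ\Res_H^{S_n}=\Res_H^{S_n}\circ\ell_{S_n}$. Hence $\Res_H^{S_n}(x)\in K(H)$, and therefore
\[
  [S_n/H]\,x\in\Ind_H^{S_n}K(H).
\]

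Finally, since $H\in\mathcal H_n^{\mathrm{np}}$, Lemma~\ref{lem:all-nonprimitive-induction} gives $\Ind_H^{S_n}K(H)\subseteq J_n$, which finishes the proof.

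I expect no real obstacle. The only point needing care is the identification $[S_n/H]=\Ind_H^{S_n}(\one_H)$ together with the fact that $\one_H$ is the multiplicative identity of $\Omega(H)$, which is what lets the projection formula collapse to $\Ind\circ\Res$. Everything else is a direct citation of earlier results. Note that the lemma holds for every $H\in\mathcal H_n^{\mathrm{np}}$, including Young subgroups and the wreath subgroups $W_{a,b}$ themselves.
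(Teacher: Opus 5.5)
Your proposal is correct and follows essentially the same route as the paper: apply the projection formula with $L=H$ and the unit $[H/H]$ to get $[S_n/H]x=\Ind_H^{S_n}\Res_H^{S_n}(x)$, note $\Res_H^{S_n}(x)\in K(H)$ by naturality of linearization, and conclude via Lemma~\ref{lem:all-nonprimitive-induction}. Your optional direct check of the identity through the isomorphism $S_n/H\times X\cong S_n\times_H\Res_H^{S_n}X$ is also correct; it simply proves the projection formula in the case the paper uses.
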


\begin{proof}
Apply the projection formula \eqref{eq:projection-formula} with $L=H$, with the
unit $[H/H]\in\Omega(H)$ as its first input, and with
$x\in K(S_n)\subseteq\Omega(S_n)$ as its second input. We obtain
\[
\begin{aligned}
  \Ind_H^{S_n}\Res_H^{S_n}(x)
  &=
  \Ind_H^{S_n}\bigl([H/H]\,\Res_H^{S_n}(x)\bigr)\\
  &=
  \Ind_H^{S_n}([H/H])\,x\\
  &=
  [S_n/H]x.
\end{aligned}
\]
Thus
\[
  [S_n/H]x
  =
  \Ind_H^{S_n}\Res_H^{S_n}(x).
\]
Since the linearization map commutes with restriction,
\[
  \Res_H^{S_n}(x)\in K(H).
\]
The right-hand side therefore belongs to $J_n$ by
Lemma~\ref{lem:all-nonprimitive-induction}.
\end{proof}

\begin{lemma}\label{lem:zero-support}
If $x\in N_n$ and $\omega_n(x)=0$, then
\[
  x\in
  \left\langle
    [S_n/H]
    \mathrel{}\middle|\mathrel{}
    H\in\mathcal H_n^{\mathrm{np}}
  \right\rangle_{\Z}.
\]
\end{lemma}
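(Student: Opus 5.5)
Write $x=\sum_H m_H\Theta_H$ in the standard basis of $N_n$, where $H$ runs over the action-imprimitive, non-Young class representatives. By definition,
\[
  \Theta_H=[S_n/H]-\sigma_n\bigl(\ell_{S_n}([S_n/H])\bigr)=[S_n/H]-\sum_{\lambda\vdash n}a_{H,\lambda}[S_n/S_\lambda].
\]
Substituting gives
\[
  x=\sum_H m_H[S_n/H]-\sum_{\lambda\vdash n}\Bigl(\sum_H m_H a_{H,\lambda}\Bigr)[S_n/S_\lambda].
\]
We show that every Burnside basis element on the right with a possibly nonzero coefficient is indexed by an action-imprimitive subgroup, except possibly $[S_n/S_n]$, and then that the coefficient of $[S_n/S_n]$ vanishes.

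First, each $H$ in the first sum lies in $\mathcal H_n^{\mathrm{np}}$ by the definition of $N_n$. Next, for $\lambda\ne(n)$, the Young subgroup $S_\lambda$ is a proper Young subgroup. Its natural action on $\{1,\ldots,n\}$ is intransitive, so $S_\lambda\in\mathcal H_n^{\mathrm{np}}$ by Definition~\ref{def:action-imprimitive-subgroup}. Since $H$ is not conjugate to a Young subgroup, the two sums involve disjoint basis elements, and no cancellation issues arise. In any case, all these terms already lie in the desired span.

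The only remaining basis element is $[S_n/S_{(n)}]=[S_n/S_n]$. The group $S_n$ is transitive and preserves no nontrivial block system, so it is not action-imprimitive. Its coefficient in $x$ is $-\sum_H m_H a_{H,(n)}$. By the computation in the proof of Proposition~\ref{prop:fixed-coset}, this coefficient equals $-\omega_n(x)$; equivalently, it is $-\coeff_{[S_n/S_n]}(x)=\omega_n(x)$ up to sign, directly from \eqref{eq:omega-definition}. Hence the hypothesis $\omega_n(x)=0$ forces this coefficient to vanish, and $x$ lies in $\langle [S_n/H]\mid H\in\mathcal H_n^{\mathrm{np}}\rangle_{\Z}$.

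The only step needing care is checking that no primitive or transitive subgroup other than $S_n$ can appear in the support. This holds because the Young section produces only Young coset spaces, and among the Young subgroups only $S_{(n)}$ is transitive. So there is no real obstacle: the statement is essentially a bookkeeping consequence of the definitions of $\omega_n$ and $\Theta_H$.
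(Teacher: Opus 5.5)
Your proof is correct and matches the paper's argument: expand $x$ in the basis $\Theta_H$, note that every Young-section term other than $[S_n/S_n]$ is indexed by an intransitive proper Young subgroup, and kill the $[S_n/S_n]$ coefficient using $\coeff_{[S_n/S_n]}(x)=-\omega_n(x)$. That identity is just the definition of $\omega_n$, so you can drop both the appeal to Proposition~\ref{prop:fixed-coset} and the ``up to sign'' aside, which is muddled as written.
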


\begin{proof}
By \eqref{eq:nonprimitive-lattice}, $x$ has a unique expansion
\[
  x
  =
  \sum_{\substack{
    (H)\in\operatorname{Sub}(S_n)^c\\
    H\in\mathcal H_n^{\mathrm{np}}\\
    H\text{ is not conjugate to a Young subgroup}
  }}
  m_H\Theta_H
  \qquad
  (m_H\in\Z).
\]
For every subgroup occurring in this sum,
\[
  \Theta_H
  =
  [S_n/H]
  -
  \sigma_n\bigl(\ell_{S_n}([S_n/H])\bigr).
\]
The Young-section term is a $\Z$-linear combination of Young coset spaces.
Every proper Young subgroup is intransitive and hence belongs to
$\mathcal H_n^{\mathrm{np}}$. Consequently, the only Burnside basis element
that may occur in $x$ without being of the form $[S_n/H]$ for some
$H\in\mathcal H_n^{\mathrm{np}}$ is $[S_n/S_n]$. By the definition of
$\omega_n$,
\[
  \coeff_{[S_n/S_n]}(x)
  =
  -\omega_n(x)
  =
  0.
\]
The assertion follows.
\end{proof}

\begin{lemma}\label{lem:wreath-unit}
Let $n=ab$ with $a,b>1$, and set
\[
\begin{aligned}
  \mathcal R_{a,b}
  &:=-\Theta_{W_{a,b}}\\
  &=-[S_n/W_{a,b}]
  +\sigma_n\bigl(\ell_{S_n}([S_n/W_{a,b}])\bigr).
\end{aligned}
\]
Then there exists
\[
  u\in
  \left\langle
    [S_n/H]
    \mathrel{}\middle|\mathrel{}
    H\in\mathcal H_n^{\mathrm{np}}
  \right\rangle_{\Z}
\]
such that
\[
  \mathcal R_{a,b}=[S_n/S_n]+u.
\]
\end{lemma}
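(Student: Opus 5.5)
We expand $\mathcal R_{a,b}$ in the Burnside basis and locate the coefficient of $[S_n/S_n]$. The Young section term is
\[
  \sigma_n\bigl(\ell_{S_n}([S_n/W_{a,b}])\bigr)
  =\sum_{\lambda\vdash n}a_{W_{a,b},\lambda}[S_n/S_\lambda],
\]
and, as in the proof of Proposition~\ref{prop:fixed-coset}, the coefficient $a_{W_{a,b},(n)}$ equals $|(S_n/W_{a,b})^{C_n}|$. By Lemma~\ref{lem:wreath-one} this number is $1$. Hence the coefficient of $[S_n/S_{(n)}]=[S_n/S_n]$ in $\mathcal R_{a,b}$ is $+1$; the term $-[S_n/W_{a,b}]$ does not contribute to it because $W_{a,b}\ne S_n$.

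We then define
\[
  u:=\mathcal R_{a,b}-[S_n/S_n]
  =-[S_n/W_{a,b}]+\sum_{\lambda\ne(n)}a_{W_{a,b},\lambda}[S_n/S_\lambda].
\]
Each summand has the required form, for two reasons:
\begin{enumerate}[label=\textup{(\roman*)},leftmargin=3em]
\item $W_{a,b}$ is transitive and preserves the nontrivial block system $\mathcal B_{a,b}$, so it is transitive imprimitive and lies in $\mathcal H_n^{\mathrm{np}}$;
\item for $\lambda\ne(n)$, the Young subgroup $S_\lambda$ is proper, so it is intransitive and lies in $\mathcal H_n^{\mathrm{np}}$.
\end{enumerate}
Therefore $u$ lies in $\langle [S_n/H]\mid H\in\mathcal H_n^{\mathrm{np}}\rangle_{\Z}$, which proves the lemma.

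The only substantive input is the identity $a_{W_{a,b},(n)}=1$. This follows from evaluating characters at $c$: for $\lambda\ne(n)$ the mark $\phi_{C_n}([S_n/S_\lambda])$ vanishes, so only the $\lambda=(n)$ term survives, and Lemma~\ref{lem:wreath-one} gives its value. There is no real obstacle beyond checking that $W_{a,b}$ is not conjugate to a Young subgroup, so that $[S_n/W_{a,b}]$ does not merge with a Young coset basis element. This holds because $W_{a,b}$ is transitive while every proper Young subgroup is intransitive, and $W_{a,b}\ne S_n$ because $a,b>1$.
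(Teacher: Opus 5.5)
Your proof is correct and takes essentially the same approach as the paper. You read off the coefficient of $[S_n/S_n]$ in $\mathcal R_{a,b}$ as $a_{W_{a,b},(n)}=|(S_n/W_{a,b})^{C_n}|=1$, which is what the paper gets from Lemma~\ref{lem:wreath-one} and the definition of $\omega_n$. You then observe that $W_{a,b}$ and every proper Young subgroup lie in $\mathcal H_n^{\mathrm{np}}$.
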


\begin{proof}
By Lemma~\ref{lem:wreath-one},
\[
  \omega_n(\Theta_{W_{a,b}})=1.
\]
By the definition of $\omega_n$,
\[
  \coeff_{[S_n/S_n]}(\mathcal R_{a,b})
  =
  -\coeff_{[S_n/S_n]}(\Theta_{W_{a,b}})
  =
  \omega_n(\Theta_{W_{a,b}})
  =
  1.
\]

Moreover, $W_{a,b}$ is transitive imprimitive, so
$W_{a,b}\in\mathcal H_n^{\mathrm{np}}$. The Young-section term in
$\mathcal R_{a,b}$ is a $\Z$-linear combination of the coset spaces
$[S_n/S_\lambda]$. For $\lambda\ne(n)$, the subgroup $S_\lambda$ is a proper
Young subgroup and is therefore intransitive. Thus, after separating the term
$[S_n/S_{(n)}]=[S_n/S_n]$, every remaining Burnside basis element is of the
form $[S_n/H]$ with $H\in\mathcal H_n^{\mathrm{np}}$. This proves the assertion.
\end{proof}

\begin{maintheoremrestated}
Let $n\ge4$ be composite. Then
\[
  J_n=\ker\bigl(\omega_n|_{N_n}\bigr),
  \qquad
  \omega_n(N_n)=\Z.
\]
Consequently, $\omega_n$ induces an isomorphism
\[
  \overline\omega_n:
  O_n=N_n/J_n\xrightarrow{\sim}\Z.
\]
Moreover, for every proper subgroup $H<S_n$,
\[
  \omega_n(\Theta_H)
  =
  \chi_{\Q[S_n/H]}(c)
  =
  |(S_n/H)^{\langle c\rangle}|.
\]
\end{maintheoremrestated}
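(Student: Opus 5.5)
The inclusion $J_n\subseteq\ker(\omega_n|_{N_n})$ follows from Lemma~\ref{lem:J-in-N} together with Lemma~\ref{lem:omega-J}. Surjectivity $\omega_n(N_n)=\Z$ is Corollary~\ref{cor:surjective}. So the substance is the reverse inclusion $\ker(\omega_n|_{N_n})\subseteq J_n$. Once it is in hand, $\overline\omega_n$ is an isomorphism by the first isomorphism theorem.

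For the reverse inclusion, let $x\in N_n$ with $\omega_n(x)=0$. By Lemma~\ref{lem:zero-support} we can write $x=\sum_{H}m_H[S_n/H]$, where every $H\in\mathcal H_n^{\mathrm{np}}$. Fix a factorization $n=ab$ with $a,b>1$. By Lemma~\ref{lem:wreath-unit}, the element $\mathcal R:=\mathcal R_{a,b}=-\Theta_{W_{a,b}}$ lies in $K(S_n)$ and satisfies $\mathcal R=[S_n/S_n]+u$, with $u$ in the span of the $[S_n/H]$ for $H\in\mathcal H_n^{\mathrm{np}}$. The idea is to use $\mathcal R$ as a ``unit modulo $J_n$''. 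Since $[S_n/S_n]$ is the identity of $\Omega(S_n)$,
\[
  x = x\,\mathcal R - x\,u .
\]
The term $x\mathcal R=\sum_H m_H[S_n/H]\mathcal R$ lies in $J_n$ by Lemma~\ref{lem:projection}, because each $H\in\mathcal H_n^{\mathrm{np}}$ and $\mathcal R\in K(S_n)$. For the term $xu$, write $u=\sum_{H'}u_{H'}[S_n/H']$ with $H'\in\mathcal H_n^{\mathrm{np}}$. Then $xu=\sum_{H'}u_{H'}[S_n/H']\,x$. Here $x\in N_n\subseteq K(S_n)$, so Lemma~\ref{lem:projection} applies again and gives $xu\in J_n$. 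Hence $x\in J_n$. The key point to check is that Lemma~\ref{lem:projection} is applied with the Brauer-relation factor in the correct slot: $\mathcal R$ in the first use and $x$ in the second. Both are genuinely in $K(S_n)$, so no circularity arises.

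For the final formula, let $H<S_n$. If $H$ is not conjugate to a Young subgroup, the formula is Proposition~\ref{prop:fixed-coset} combined with \eqref{eq:permutation-character-fixed-points}. If $H$ is conjugate to a proper Young subgroup $S_\lambda$ with $\lambda\ne(n)$, then $[S_n/H]\in P_n$. In that case $\Theta_H=[S_n/H]-\sigma_n\ell_{S_n}([S_n/H])=0$, so $\omega_n(\Theta_H)=0$. On the other side, $H$ is intransitive and $C_n$ is transitive, so no conjugate of $C_n$ lies in $H$; by \eqref{eq:fixed-coset-formula} the fixed-point count is $0$ as well. The main obstacle is only the bookkeeping in the reverse inclusion, namely verifying that Lemma~\ref{lem:zero-support} is what allows every Burnside basis element of $x$ to be absorbed by the projection formula. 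The rest is assembling earlier results.
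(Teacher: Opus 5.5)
Your proposal is correct and follows the paper's proof essentially step for step. You obtain the inclusion $J_n\subseteq\ker(\omega_n|_{N_n})$ from Lemmas~\ref{lem:J-in-N} and~\ref{lem:omega-J}, and the reverse inclusion from the identity $x=x\mathcal R_{a,b}-xu$, using Lemma~\ref{lem:zero-support}, Lemma~\ref{lem:wreath-unit}, and two applications of Lemma~\ref{lem:projection} (with $\mathcal R_{a,b}$, respectively $x$, as the Brauer-relation factor); surjectivity comes from Corollary~\ref{cor:surjective}, and the fixed-point formula from the same two-case check (non-Young versus proper Young $H$).
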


\begin{proof}
We first prove the final assertion. If $H$ is not conjugate to a Young subgroup,
it follows from Proposition~\ref{prop:fixed-coset} and
\eqref{eq:permutation-character-fixed-points}. If $H$ is conjugate to a proper
Young subgroup, then $\Theta_H=0$. Moreover, $H$ is intransitive, whereas
$C_n=\langle c\rangle$ is transitive, so
\[
  \chi_{\Q[S_n/H]}(c)
  =
  |(S_n/H)^{\langle c\rangle}|
  =0.
\]
Thus the stated equality holds for every proper subgroup $H<S_n$.

By Lemmas~\ref{lem:J-in-N} and~\ref{lem:omega-J},
\[
  J_n\subseteq\ker\bigl(\omega_n|_{N_n}\bigr).
\]
Conversely, take an arbitrary
\[
  x\in\ker\bigl(\omega_n|_{N_n}\bigr).
\]
Thus $x\in N_n$ and $\omega_n(x)=0$. Since $n$ is composite, fix a
factorization $n=ab$ with $a,b>1$, and set
\[
  \mathcal R:=\mathcal R_{a,b}=-\Theta_{W_{a,b}}.
\]
By Lemma~\ref{lem:wreath-unit}, we may write
\[
  \mathcal R=[S_n/S_n]+u,
\]
where
\[
  u\in
  \left\langle
    [S_n/H]
    \mathrel{}\middle|\mathrel{}
    H\in\mathcal H_n^{\mathrm{np}}
  \right\rangle_{\Z}.
\]
Lemma~\ref{lem:zero-support} gives
\[
  x\in
  \left\langle
    [S_n/H]
    \mathrel{}\middle|\mathrel{}
    H\in\mathcal H_n^{\mathrm{np}}
  \right\rangle_{\Z}.
\]
Since $\mathcal R\in K(S_n)$, Lemma~\ref{lem:projection}, applied to the
Burnside basis expansion of $x$, gives
\[
  x\mathcal R\in J_n.
\]
On the other hand, $x\in N_n\subseteq K(S_n)$, and applying the same lemma to
the Burnside basis expansion of $u$ gives
\[
  xu\in J_n.
\]
Since $[S_n/S_n]$ is the identity element of $\Omega(S_n)$, we have
\[
  x\mathcal R=x+xu.
\]
Therefore,
\[
  x=x\mathcal R-xu\in J_n.
\]
This proves
\[
  J_n=\ker\bigl(\omega_n|_{N_n}\bigr).
\]

Finally, Corollary~\ref{cor:surjective} gives
\[
  \omega_n(N_n)=\Z.
\]
The first isomorphism theorem therefore yields
\[
  O_n
  =N_n/J_n
  =N_n/\ker\bigl(\omega_n|_{N_n}\bigr)
  \xrightarrow{\sim}\Z.
\]
\end{proof}

\section{Comparison with the primitive quotient}\label{sec:primitive-comparison}

\subsection{The comparison exact sequence}

Recall that
\[
  I_n:=I(S_n),
  \qquad
  \Prim(S_n)=K(S_n)/I_n.
\]
Since $J_n\subseteq I_n$, the inclusion $N_n\hookrightarrow K(S_n)$ induces a
homomorphism
\[
  \rho_n:O_n\longrightarrow\Prim(S_n).
\]

\begin{proposition}\label{prop:difference-sequence}
There is a natural exact sequence
\[
  0\longrightarrow\frac{N_n\cap I_n}{J_n}
  \longrightarrow O_n\xrightarrow{\rho_n}\Prim(S_n)
  \longrightarrow\frac{K(S_n)}{N_n+I_n}\longrightarrow0.
\]
\end{proposition}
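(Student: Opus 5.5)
This is a purely formal statement: it is the standard kernel--cokernel sequence for a map between two quotients of nested subgroups of $K(S_n)$. I would prove it directly from the definitions, using only that $J_n\subseteq N_n$ (Lemma~\ref{lem:J-in-N}) and $J_n\subseteq I_n$.

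First I check that $J_n\subseteq I_n$, which is needed even to define $\rho_n$. Proper Young subgroups and the standard wreath subgroups $W_{a,b}$ are proper subgroups of $S_n$. Hence each summand $\Ind_L^{S_n}K(L)$ in the definition of $J_n$ is one of the summands defining $I(S_n)$ in \eqref{eq:imprimitive-lattice-general}. Therefore $\rho_n$ is well defined, given by
\[
  x+J_n\longmapsto x+I_n\qquad (x\in N_n),
\]
which is the composite $N_n\hookrightarrow K(S_n)\to\Prim(S_n)$ followed by the factorization through $N_n/J_n$.

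Next I identify the kernel of $\rho_n$. A class $x+J_n$ with $x\in N_n$ maps to zero exactly when $x\in I_n$, that is, when $x\in N_n\cap I_n$. Since $J_n\subseteq N_n\cap I_n$, the quotient $(N_n\cap I_n)/J_n$ is a subgroup of $O_n=N_n/J_n$. It equals $\ker\rho_n$, which gives injectivity of the first map and exactness at $O_n$.

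Then I treat the cokernel. The image of $\rho_n$ is $(N_n+I_n)/I_n\subseteq K(S_n)/I_n$. The third isomorphism theorem identifies
\[
  \Prim(S_n)\big/\bigl((N_n+I_n)/I_n\bigr)\;\cong\;K(S_n)/(N_n+I_n).
\]
So the natural projection $\Prim(S_n)\to K(S_n)/(N_n+I_n)$ is surjective with kernel equal to $\operatorname{im}\rho_n$. This gives exactness at $\Prim(S_n)$ and at the last term.

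Naturality just means that all maps are induced by inclusions and projections of subgroups of $K(S_n)$. There is no real obstacle; the only substantive input is the inclusion $J_n\subseteq N_n\cap I_n$, which was verified above.
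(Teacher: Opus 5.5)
Your proposal is correct and follows the paper's proof: the kernel of $\rho_n$ is $(N_n\cap I_n)/J_n$, the image is $(N_n+I_n)/I_n$, and the cokernel is therefore $K(S_n)/(N_n+I_n)$. You additionally spell out why $J_n\subseteq I_n$ and $J_n\subseteq N_n$ hold, which the paper takes for granted at this point; those checks are correct.
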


\begin{proof}
The kernel of $\rho_n$ is $(N_n\cap I_n)/J_n$. Its image is
$(N_n+I_n)/I_n$, and hence its cokernel is $K(S_n)/(N_n+I_n)$.
\end{proof}

The leftmost term measures the relations that survive in the Young--wreath
quotient but become trivial after allowing relations induced from arbitrary
proper subgroups or inflated from proper quotient groups. The rightmost term
measures the part of the primitive quotient that cannot be represented by
standard relations associated with action-imprimitive subgroups.

\subsection{Application of the Bartel--Dokchitser classification}

To apply the classification theorem of Bartel--Dokchitser, we record the
relevant conditions on quotients of symmetric groups. Recall that a finite
group is called $p$-quasi-elementary if it has a cyclic normal subgroup of
order prime to $p$ whose quotient is a $p$-group.

\begin{lemma}\label{lem:BD-application}
The following statements hold.
\begin{enumerate}[label=\textup{(\roman*)},leftmargin=3em,labelsep=0.6em]
\item If $n\ge5$, then $S_n$ is not quasi-elementary, and every proper quotient
      $S_n/N$ with $1\ne N\triangleleft S_n$ is cyclic.
\item The group $S_4$ is not quasi-elementary. Its proper quotients are, up to
      isomorphism,
      \[
        1,\qquad C_2,\qquad S_3.
      \]
      All of them are $2$-quasi-elementary, and among them only $S_3$ is
      noncyclic.
\end{enumerate}
\end{lemma}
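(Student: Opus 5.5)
The plan is to check the two parts of Lemma~\ref{lem:BD-application} separately, using only standard facts about normal subgroups of symmetric groups. The one definition that needs care is \emph{quasi-elementary}. A group is quasi-elementary if it is $p$-quasi-elementary for some prime $p$, i.e.\ it has a cyclic normal subgroup $C$ of order prime to $p$ with $G/C$ a $p$-group.

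For part (i), let $n\ge5$. The normal subgroups of $S_n$ are $1$, $A_n$ and $S_n$, because $A_n$ is simple. So the proper quotients $S_n/N$ with $N\ne1$ are $S_n/A_n\cong C_2$ and $S_n/S_n=1$, and both are cyclic.

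To show $S_n$ is not quasi-elementary, I would take a hypothetical cyclic normal subgroup $C$. Then $C\in\{1,A_n,S_n\}$. The subgroups $A_n$ and $S_n$ are not cyclic for $n\ge4$ (they are nonabelian), so $C=1$. This would force $S_n$ to be a $p$-group, which is false since $|S_n|=n!$ has at least two prime divisors for $n\ge3$.

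For part (ii), I would list the normal subgroups of $S_4$: $1$, $V_4$, $A_4$, $S_4$. The quotients by nontrivial normal subgroups are $S_4/V_4\cong S_3$, $S_4/A_4\cong C_2$ and $S_4/S_4=1$.

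To show $S_4$ is not quasi-elementary, argue as before. The only cyclic normal subgroup is $1$, since $V_4$, $A_4$ and $S_4$ are noncyclic. So $S_4$ would have to be a $p$-group, but $|S_4|=24$ has two prime divisors.

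It remains to check the quotients. The groups $1$ and $C_2$ are $2$-quasi-elementary with $C=1$. For $S_3$, take $C=C_3=A_3$: it is cyclic and normal of order $3$, prime to $2$, and $S_3/C_3\cong C_2$ is a $2$-group. Finally, among $1$, $C_2$ and $S_3$, only $S_3$ is nonabelian, so it is the only noncyclic one.

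None of these steps is a real obstacle; everything reduces to the classification of normal subgroups of $S_n$. The only point needing care is applying the definition of quasi-elementary correctly, in particular that the cyclic normal subgroup may be trivial, which is what makes $1$ and $C_2$ qualify.
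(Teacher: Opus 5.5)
Your proposal is correct and follows essentially the same route as the paper: you classify the normal subgroups of $S_n$ (resp.\ $S_4$), read off the quotients, and rule out quasi-elementarity because the only cyclic normal subgroup is trivial while $|S_n|$ is not a prime power. The one difference is that the paper spells out why simplicity of $A_n$ forces the normal subgroups of $S_n$ to be $1,A_n,S_n$: if $N\cap A_n=1$ then $[N,A_n]=1$, so $N\le C_{S_n}(A_n)=1$. Your phrase ``because $A_n$ is simple'' leaves this routine step implicit.
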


\begin{proof}
Suppose first that $n\ge5$, and let $N\triangleleft S_n$. Then
$N\cap A_n\triangleleft A_n$. By the simplicity of $A_n$, the intersection
$N\cap A_n$ is either $1$ or $A_n$.

In the former case, the commutator subgroup $[N,A_n]$ satisfies
\[
  [N,A_n]\le N\cap A_n=1.
\]
Hence $N$ centralizes $A_n$, so
\[
  N\le C_{S_n}(A_n),
\]
where $C_{S_n}(A_n)$ denotes the centralizer of $A_n$ in $S_n$. This
centralizer is trivial: an element centralizing $A_n$ centralizes every
$3$-cycle, which forces it to fix every point. Hence $N=1$. In the latter
case, $A_n\le N$. Since $A_n$ has
index $2$ in $S_n$, we have $N=A_n$ or $N=S_n$. Consequently, the only
normal subgroups of $S_n$ are
\[
  1,\qquad A_n,\qquad S_n,
\]
and the proper quotients by nontrivial normal subgroups are $C_2$ and $1$.

For $n=4$, the normal subgroups are
\[
  1,\qquad V_4,\qquad A_4,\qquad S_4,
\]
and the corresponding proper quotients by nontrivial normal subgroups are
$S_3$, $C_2$, and $1$. The groups $1$ and $C_2$ are $2$-groups and hence
are $2$-quasi-elementary, while
\[
  S_3=C_3\rtimes C_2
\]
is $2$-quasi-elementary and noncyclic.

Finally, $S_n$ has no nontrivial cyclic normal subgroup for $n\ge4$, and
$|S_n|=n!$ is not a prime power. Thus, if $S_n$ were $p$-quasi-elementary,
its cyclic normal subgroup would have to be trivial, forcing $S_n$ itself to
be a $p$-group, a contradiction. Hence $S_n$ is not quasi-elementary.
\end{proof}

By Bartel--Dokchitser's Theorem~4.3, for a non-quasi-elementary group
$G$, one has $\Prim(G)\cong\Z$ if every proper quotient of $G$ is cyclic, and
$\Prim(G)\cong\Z/p\Z$ if every proper quotient is $p$-quasi-elementary for
the same prime $p$ and at least one of them is noncyclic. In both cases,
$\Prim(G)$ is generated by any relation $\Theta$ satisfying
\[
  \coeff_{[G/G]}(\Theta)=1.
\]
See \cite[Theorem~4.3]{BartelDokchitserBrauer}. Lemma~\ref{lem:BD-application}
therefore gives
\begin{equation}\label{eq:BD-values}
  \Prim(S_n)\cong
  \begin{cases}
    \Z/2\Z,&n=4,\\
    \Z,&n\ge5.
  \end{cases}
\end{equation}

Recall that
\[
  O_n:=N_n/J_n,
\]
where $N_n$ has $\Z$-basis consisting of the standard relations $\Theta_H$
indexed by the conjugacy classes of action-imprimitive subgroups that are not
conjugate to Young subgroups, and
\[
  J_n=J_n^{\mathrm Y}+J_n^{\mathrm W}
\]
is the submodule generated by relations induced from proper Young subgroups
and standard wreath subgroups.

\begin{primitivecomparisonrestated}
Let $n\ge4$.
\begin{enumerate}[label=\textup{(\roman*)},leftmargin=3em,labelsep=0.6em]
\item If $n\ge6$ is composite, then
\[
  \rho_n:O_n\xrightarrow{\sim}\Prim(S_n)
\]
is an isomorphism.
\item If $n=4$, then, under the identifications
\[
  O_4\cong\Z,
  \qquad
  \Prim(S_4)\cong\Z/2\Z,
\]
the map $\rho_4$ is reduction modulo $2$.
\item If $n\ge5$ is prime, then
\[
  O_n=0,
  \qquad
  \Prim(S_n)\cong\Z.
\]
\end{enumerate}
\end{primitivecomparisonrestated}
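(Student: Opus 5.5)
The plan is to compute $\rho_n$ on a single generator and then use the Bartel--Dokchitser description of $\Prim(S_n)$ recalled before the statement. For composite $n$, fix a factorization $n=ab$ with $a,b>1$ and put $\mathcal R=\mathcal R_{a,b}=-\Theta_{W_{a,b}}\in N_n$.

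By Lemma~\ref{lem:wreath-one} and the definition of $\omega_n$, we have $\omega_n(\mathcal R)=-1$. Hence the class of $\mathcal R$ generates $O_n\cong\Z$ by Main Theorem~\ref{thm:main}. By Lemma~\ref{lem:wreath-unit}, $\coeff_{[S_n/S_n]}(\mathcal R)=1$. By \cite[Theorem~4.3]{BartelDokchitserBrauer}, together with Lemma~\ref{lem:BD-application} and \eqref{eq:BD-values}, the class of $\mathcal R$ modulo $I_n$ therefore generates $\Prim(S_n)$. Consequently $\rho_n$ sends a generator of $O_n$ to a generator of $\Prim(S_n)$, so $\rho_n$ is surjective in every composite case.

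For (i), $n\ge6$ composite gives $\Prim(S_n)\cong\Z$. Then $\rho_n$ is a surjection $\Z\to\Z$, hence an isomorphism. Equivalently, the kernel term $(N_n\cap I_n)/J_n$ in Proposition~\ref{prop:difference-sequence} vanishes, because a surjective endomorphism of $\Z$ is injective.

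For (ii), $n=4$ gives $\Prim(S_4)\cong\Z/2\Z$. The map $\rho_4$ sends the generator $\overline{\mathcal R}$, which corresponds to $\pm1$ under $\overline\omega_4$, to the nonzero element. Any surjection $\Z\to\Z/2\Z$ is reduction modulo $2$, so the identification is independent of sign choices. The one point to check is that the Bartel--Dokchitser criterion really applies to $S_4$: all proper quotients $1,C_2,S_3$ must be $2$-quasi-elementary, with one of them noncyclic. Lemma~\ref{lem:BD-application}(ii) supplies exactly this.

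For (iii), let $n\ge5$ be prime. Then $n$ has no factorization $n=ab$ with $a,b>1$, so there are no transitive imprimitive subgroups: a nontrivial block system would have block size dividing $n$. Hence every $H\in\mathcal H_n^{\mathrm{np}}$ is intransitive, and Proposition~\ref{prop:intransitive-in-Y} gives $\Theta_H\in J_n^{\mathrm Y}\subseteq J_n$. It follows that $N_n\subseteq J_n$, and with Lemma~\ref{lem:J-in-N} we get $N_n=J_n$, so $O_n=0$. Finally, $\Prim(S_n)\cong\Z$ by \eqref{eq:BD-values}.

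The main obstacle is not in the group theory but in invoking the generator statement of \cite[Theorem~4.3]{BartelDokchitserBrauer} correctly. We need that any relation with coefficient $1$ at $[G/G]$ generates $\Prim(G)$. For the reduction to this statement to be sound, we also need $J_n\subseteq I_n$ (so that $\rho_n$ is well defined), and that sign conventions do not matter; the sign issue is harmless because $\pm$ a generator is again a generator.
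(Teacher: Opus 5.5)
Your proposal is correct and follows the paper's proof essentially step for step. For (i) and (ii) you use the class of $-\Theta_{W_{a,b}}$: it generates $O_n\cong\Z$ by Lemma~\ref{lem:wreath-one} and Main Theorem~\ref{thm:main}, it has coefficient $1$ at $[S_n/S_n]$ by Lemma~\ref{lem:wreath-unit}, and the Bartel--Dokchitser criterion then makes its image a generator of $\Prim(S_n)$; for (iii) you observe that there are no transitive imprimitive subgroups and apply Proposition~\ref{prop:intransitive-in-Y} and Lemma~\ref{lem:J-in-N} to get $N_n=J_n$, exactly as the paper does.
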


\begin{proof}
Suppose first that $n\ge6$ is composite, and fix a factorization $n=ab$ with
$a,b>1$. By Lemma~\ref{lem:wreath-one},
\[
  \omega_n\bigl(-\Theta_{W_{a,b}}\bigr)=-1.
\]
Hence Main Theorem~\ref{thm:main} shows that the class of
$-\Theta_{W_{a,b}}$ generates $O_n\cong\Z$. By Lemma~\ref{lem:wreath-unit},
\[
  \coeff_{[S_n/S_n]}\bigl(-\Theta_{W_{a,b}}\bigr)=1.
\]
Bartel--Dokchitser's Theorem~4.3 therefore shows that its image in
$\Prim(S_n)\cong\Z$ is a generator. Thus $\rho_n$ is an isomorphism.

For $n=4$, the same argument shows that the class of $-\Theta_{W_{2,2}}$
generates $O_4\cong\Z$ and that its image generates
$\Prim(S_4)\cong\Z/2\Z$. Under the isomorphism
\[
  \overline\omega_4:O_4\xrightarrow{\sim}\Z
\]
and the identification of $\Prim(S_4)$ with $\Z/2\Z$ given by
\[
  [x]\longmapsto \coeff_{[S_4/S_4]}(x)\pmod 2,
\]
the sign in the definition of $\omega_4$ is immaterial. The map $\rho_4$ is therefore reduction modulo
$2$.

Finally, let $n\ge5$ be prime. By the discussion in
Subsection~\ref{subsec:action-imprimitive}, a transitive imprimitive subgroup
of $S_n$ would preserve a block system with blocks of size $a$ and with $b$
blocks, giving a factorization
\[
  n=ab
  \qquad
  (a,b>1).
\]
Since $n$ is prime, no such subgroup exists. Hence every subgroup indexing the basis of $N_n$ is intransitive. By
Proposition~\ref{prop:intransitive-in-Y}, each corresponding standard
relation belongs to $J_n$, and hence
\[
  N_n\subseteq J_n.
\]
Together with Lemma~\ref{lem:J-in-N}, this gives $N_n=J_n$ and therefore
$O_n=0$. On the other hand, \eqref{eq:BD-values} gives
$\Prim(S_n)\cong\Z$.
\end{proof}

\begin{corollary}\label{cor:lattice-equalities}
For every composite $n\ge6$,
\[
  N_n\cap I_n=J_n,
  \qquad
  K(S_n)=N_n+I_n.
\]
For $n=4$,
\[
  \frac{N_4\cap I_4}{J_4}
  =\overline\omega_4^{-1}(2\Z)
  \subseteq O_4,
  \qquad
  K(S_4)=N_4+I_4.
\]
In particular, the restriction of $\overline\omega_4$ induces an
isomorphism
\[
  \frac{N_4\cap I_4}{J_4}
  \xrightarrow{\sim}
  2\Z.
\]
\end{corollary}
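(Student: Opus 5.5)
The corollary follows from the exact sequence of Proposition~\ref{prop:difference-sequence} once Main Theorem~\ref{thm:primitive-comparison} is in hand. The plan is to read off the kernel and cokernel of $\rho_n$ in each case and translate them back into lattice equalities.

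\emph{Composite $n\ge6$.} Here $\rho_n$ is an isomorphism by Main Theorem~\ref{thm:primitive-comparison}(i). Injectivity gives $(N_n\cap I_n)/J_n=0$. Since $J_n\subseteq N_n\cap I_n$ (by Lemma~\ref{lem:J-in-N} and $J_n\subseteq I_n$), this is the equality $N_n\cap I_n=J_n$. Surjectivity gives $K(S_n)/(N_n+I_n)=0$, which is $K(S_n)=N_n+I_n$.

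\emph{The case $n=4$.} By Main Theorem~\ref{thm:primitive-comparison}(ii), $\rho_4$ is reduction modulo $2$ once $O_4$ is identified with $\Z$ via $\overline\omega_4$. Its image is all of $\Z/2\Z$, because the class of $-\Theta_{W_{2,2}}$ maps to the generator. Exactness on the right then gives $K(S_4)=N_4+I_4$. Its kernel is $\overline\omega_4^{-1}(2\Z)$, and exactness on the left identifies this kernel with $(N_4\cap I_4)/J_4$.

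\emph{The final isomorphism.} Since $\overline\omega_4$ is an isomorphism onto $\Z$ (Main Theorem~\ref{thm:main}), it restricts to a bijection $\overline\omega_4^{-1}(2\Z)\to 2\Z$.

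\emph{Main obstacle.} The one point needing care is that the identification of $\Prim(S_4)$ with $\Z/2\Z$ via $\coeff_{[S_4/S_4]}\bmod 2$ must agree with the one used in Main Theorem~\ref{thm:primitive-comparison}(ii). It does, since that theorem is proved with exactly this identification, so no further computation is needed.
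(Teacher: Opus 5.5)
Your proposal is correct and is essentially the paper's own proof: both read off the kernel and cokernel of $\rho_n$ from the exact sequence of Proposition~\ref{prop:difference-sequence} using Main Theorem~\ref{thm:primitive-comparison}. Your explicit remark that $J_n\subseteq N_n\cap I_n$ makes the equality $N_n\cap I_n=J_n$ slightly more transparent, and the compatibility worry you flag for $n=4$ is automatic, since $\ker\rho_4$ is intrinsic and there is only one isomorphism $\Z/2\Z\to\Z/2\Z$.
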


\begin{proof}
Apply Proposition~\ref{prop:difference-sequence}. If $n\ge6$ is composite,
Main Theorem~\ref{thm:primitive-comparison} says that $\rho_n$ is an
isomorphism, so both its kernel and cokernel vanish. This gives
\[
  N_n\cap I_n=J_n,
  \qquad
  K(S_n)=N_n+I_n.
\]

For $n=4$, the same theorem identifies $\rho_4$ with reduction modulo $2$
under the isomorphism
\[
  \overline\omega_4:O_4\xrightarrow{\sim}\Z.
\]
Hence
\[
  \ker(\rho_4)
  =\overline\omega_4^{-1}(2\Z).
\]
Since Proposition~\ref{prop:difference-sequence} identifies
\[
  \ker(\rho_4)
  =\frac{N_4\cap I_4}{J_4},
\]
we obtain
\[
  \frac{N_4\cap I_4}{J_4}
  =\overline\omega_4^{-1}(2\Z)
  \subseteq O_4.
\]
The restriction of $\overline\omega_4$ therefore gives an isomorphism from
this subgroup onto $2\Z$. Moreover, $\rho_4$ is surjective, so its cokernel
vanishes. The exact sequence then yields
\[
  K(S_4)=N_4+I_4.
\]
\end{proof}

\section{An extension to the monomial Burnside ring over \texorpdfstring{$C_2$}{C2}}
\label{sec:monomial}

\subsection{The standard basis and linearization}

Let
\[
  C_2=\{\pm1\}.
\]
For a finite group $G$, we write $\Omega^{\pm}(G)$ for the monomial Burnside
ring over $C_2$. Its additive group has a standard $\Z$-basis indexed by the
$G$-conjugacy classes of \emph{monomial pairs}
\[
  (H,\chi),
  \qquad
  H\le G,
  \qquad
  \chi\in\Hom(H,C_2).
\]
We denote the corresponding basis element by $[H,\chi]_G$. If
${}^gK=gKg^{-1}$ and ${}^g\psi(gkg^{-1})=\psi(k)$, multiplication is given by
\[
  [H,\chi]_G[K,\psi]_G
  =
  \sum_{HgK\in H\backslash G/K}
  \left[
    H\cap{}^gK,
    \left.\chi\right|_{H\cap{}^gK}
    \cdot
    \left.{}^g\psi\right|_{H\cap{}^gK}
  \right]_G.
\]
The identity element is $[G,1]_G$. This is the $C_2$-case of Dress's
monomial Burnside ring~\cite{DressMonomial}.

For $x\in\Omega^{\pm}(G)$,
\[
  \coeff_{[H,\chi]_G}(x)
\]
denotes the coefficient of $[H,\chi]_G$ in the standard basis expansion of
$x$.

For a character $\chi:H\to C_2\subseteq\Q^\times$, let $\Q_\chi$ denote the
one-dimensional rational representation of $H$ on which $h\in H$ acts by
\[
  h\cdot q=\chi(h)q
  \qquad(q\in\Q).
\]
The linearization map
\[
  \ell_G^{\pm}:\Omega^{\pm}(G)\longrightarrow R_{\Q}(G),
  \qquad
  [H,\chi]_G\longmapsto[\Ind_H^G\Q_\chi]
\]
is a ring homomorphism. Put
\[
  K^{\pm}(G):=\ker\ell_G^{\pm}.
\]

For $L\le G$, we use the usual induction and restriction maps. On standard
basis elements they are given by
\[
  \Ind_L^G([H,\chi]_L)=[H,\chi]_G
  \qquad(H\le L)
\]
and
\[
  \Res_L^G([H,\chi]_G)
  =
  \sum_{LgH\in L\backslash G/H}
  \left[
    L\cap{}^gH,
    \left.{}^g\chi\right|_{L\cap{}^gH}
  \right]_L.
\]
They commute with the corresponding induction and restriction maps on
rational representation rings. They also satisfy the projection formula
\begin{equation}\label{eq:monomial-projection-formula}
  \Ind_L^G\bigl(x\,\Res_L^G(y)\bigr)
  =
  \Ind_L^G(x)\,y
  \qquad
  \bigl(x\in\Omega^{\pm}(L),\ y\in\Omega^{\pm}(G)\bigr).
\end{equation}
These properties follow directly from the standard basis formulas above; see
also Dress~\cite{DressMonomial}.

Attaching the trivial character defines a natural injective ring homomorphism
\[
  \iota_G:\Omega(G)\longrightarrow\Omega^{\pm}(G),
  \qquad
  [G/H]\longmapsto[H,1]_G.
\]
It satisfies
\[
  \ell_G^{\pm}\circ\iota_G=\ell_G.
\]

\subsection{The Young--wreath quotient}

For $S_n$, the map $\iota_{S_n}$ identifies the Young partial Burnside ring
$P_n$ with
\[
  P_n^{\pm,0}
  :=
  \iota_{S_n}(P_n)
  =
  \bigoplus_{\lambda\vdash n}
  \Z[S_\lambda,1]_{S_n}
  \subseteq\Omega^{\pm}(S_n).
\]
Since $\ell_{S_n}^{\pm}\circ\iota_{S_n}=\ell_{S_n}$, the Young section induces
\[
  \sigma_n^{\pm}
  :=
  \iota_{S_n}\circ\sigma_n:
  R_{\Q}(S_n)\longrightarrow P_n^{\pm,0}.
\]
We therefore define
\[
  \Theta_{H,\chi}^{\pm}
  :=
  [H,\chi]_{S_n}
  -
  \sigma_n^{\pm}\bigl([\Ind_H^{S_n}\Q_\chi]\bigr).
\]
For brevity, we call $(H,\chi)$ a \emph{trivial-character Young pair} if
$H$ is conjugate to a Young subgroup and $\chi=1$. The elements
$\Theta_{H,\chi}^{\pm}$ indexed by pairs that are not trivial-character Young
pairs form a $\Z$-basis of $K^{\pm}(S_n)$. This follows from the same
direct-sum argument as Proposition~\ref{prop:standard-basis}.

Choose a set $\mathfrak R_n^{\pm}$ of representatives of the
$S_n$-conjugacy classes of monomial pairs $(H,\chi)$ such that
$H\in\mathcal H_n^{\mathrm{np}}$ and $(H,\chi)$ is not a trivial-character
Young pair. Define
\[
  N_n^{\pm}
  :=
  \bigoplus_{(H,\chi)\in\mathfrak R_n^{\pm}}
  \Z\Theta_{H,\chi}^{\pm}
\]
and
\[
  J_n^{\pm}
  :=
  \sum_{\substack{\lambda\vdash n\\\lambda\ne(n)}}
  \Ind_{S_\lambda}^{S_n}K^{\pm}(S_\lambda)
  +
  \sum_{\substack{ab=n\\a,b>1}}
  \Ind_{W_{a,b}}^{S_n}K^{\pm}(W_{a,b}).
\]
Finally, define
\[
  \omega_n^{\pm}:K^{\pm}(S_n)\longrightarrow\Z,
  \qquad
  \omega_n^{\pm}(x)
  :=
  -\coeff_{[S_n,1]_{S_n}}(x).
\]

\begin{lemma}\label{lem:monomial-J-in-N}
We have
\[
  J_n^{\pm}\subseteq N_n^{\pm}.
\]
\end{lemma}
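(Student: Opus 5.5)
The plan is to copy the proof of Lemma~\ref{lem:J-in-N} in the monomial setting. There are three steps: control which standard basis elements can occur in an induced relation, show that standard-basis coefficients can be read off directly as $\Theta^{\pm}$-coefficients, and exclude the action-primitive pairs.

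First, let $L$ be either a proper Young subgroup $S_\lambda$ or a standard wreath subgroup $W_{a,b}$, and take $y=\Ind_L^{S_n}(z)$ with $z\in K^{\pm}(L)$. Expand
\[
  z=\sum a_{H,\chi}[H,\chi]_L,
\]
where the sum runs over $L$-conjugacy classes of monomial pairs of $L$. Since $\Ind_L^{S_n}[H,\chi]_L=[H,\chi]_{S_n}$, after combining $L$-classes that fuse in $S_n$ we see that every standard basis element $[H,\chi]_{S_n}$ occurring in $y$ has $H$ conjugate to a subgroup of $L$. As in Lemma~\ref{lem:J-in-N}, every subgroup of a proper Young subgroup is intransitive. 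Every subgroup of $W_{a,b}$ preserves $\mathcal B_{a,b}$. Hence each such $H$ lies in $\mathcal H_n^{\mathrm{np}}$, with no restriction on $\chi$. By naturality of $\ell^{\pm}$ with respect to induction, $y\in K^{\pm}(S_n)$.

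Second, expand $y$ in the $\Theta^{\pm}$-basis of $K^{\pm}(S_n)$ stated just before the lemma:
\[
  y=\sum m_{H,\chi}\Theta^{\pm}_{H,\chi},
\]
where the sum runs over pairs that are not trivial-character Young pairs. Each $\Theta^{\pm}_{H,\chi}$ equals $[H,\chi]_{S_n}$ plus an element of $P_n^{\pm,0}$, which is spanned by the basis elements $[S_\lambda,1]_{S_n}$. These are exactly the trivial-character Young pairs, and they are excluded from the indexing set. Therefore, for every pair in the indexing set, $m_{H,\chi}=\coeff_{[H,\chi]_{S_n}}(y)$.

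Third, by the first step this coefficient vanishes whenever $H$ is action-primitive. This includes the case $H=S_n$, which is primitive: for $\chi=1$ the pair is a trivial-character Young pair and is not indexed, and for $\chi=\operatorname{sgn}$ the coefficient is $0$ because $S_n\notin\mathcal H_n^{\mathrm{np}}$. So only indices $(H,\chi)\in\mathfrak R_n^{\pm}$ occur, and $y\in N_n^{\pm}$. Summing over all generating submodules of $J_n^{\pm}$ gives the lemma.

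The main point requiring care is the second step. One must check that the $\Theta^{\pm}$-basis statement really holds, using the direct-sum argument with $F_n^{\pm}$ spanned by the standard basis elements that are not trivial-character Young pairs. One must also check that the correction terms from $\sigma_n^{\pm}$ involve only trivial characters, which holds because $\sigma_n^{\pm}=\iota_{S_n}\circ\sigma_n$. Given that, extracting coefficients is immediate.
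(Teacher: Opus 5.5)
Your proposal is correct and follows essentially the same route as the paper. Both arguments observe that every basis element $[H,\chi]_{S_n}$ induced from $S_\lambda$ or $W_{a,b}$ has $H$ action-imprimitive, and both use that $\Theta^{\pm}_{H,\chi}$ differs from $[H,\chi]_{S_n}$ only by trivial-character Young terms, reading off coefficients as in Lemma~\ref{lem:J-in-N}; your explicit check of the $\Theta^{\pm}$-basis and of the pair $(S_n,\operatorname{sgn})$ fills in details the paper leaves implicit.
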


\begin{proof}
A standard basis element induced from a proper Young subgroup or from a
standard wreath subgroup has the form $[H,\chi]_{S_n}$, where $H$ is
conjugate to a subgroup of the source of induction. Hence $H$ is
action-imprimitive.

For each $\Theta_{H,\chi}^{\pm}$, the only standard basis term outside the
span of the trivial-character Young pairs is $[H,\chi]_{S_n}$. The same
coordinate comparison as in the proof of Lemma~\ref{lem:J-in-N} therefore
shows that no coefficient corresponding to a transitive primitive subgroup can
occur in an element of $J_n^{\pm}$.
\end{proof}

Consequently, the quotient
\[
  O_n^{\pm}:=N_n^{\pm}/J_n^{\pm}
\]
is well defined.

\begin{lemma}\label{lem:monomial-nonprimitive-induction}
If $H\in\mathcal H_n^{\mathrm{np}}$, then
\[
  \Ind_H^{S_n}K^{\pm}(H)\subseteq J_n^{\pm}.
\]
\end{lemma}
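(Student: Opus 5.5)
The plan is to copy the proof of Lemma~\ref{lem:all-nonprimitive-induction}, replacing the ordinary Burnside ring by $\Omega^{\pm}$. The only inputs are transitivity of induction for monomial Burnside rings, and the fact that induction carries $K^{\pm}$ of a subgroup into $K^{\pm}$ of a larger group.

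First I record the naturality statement. For $H\le L\le G$ we have $\Ind_H^G=\Ind_L^G\circ\Ind_H^L$ on $\Omega^{\pm}(H)$. This is immediate from the basis formula $\Ind_H^L([H',\chi]_H)=[H',\chi]_L$, since both sides send $[H',\chi]_H$ to $[H',\chi]_G$. Because monomial induction commutes with induction on rational representation rings, $\ell_L^{\pm}\circ\Ind_H^L=\Ind_H^L\circ\ell_H^{\pm}$, and therefore $\Ind_H^L K^{\pm}(H)\subseteq K^{\pm}(L)$.

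Next I split into the same two cases as before.

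\emph{Intransitive case.} Let $H$ be intransitive with orbits $\mathcal O_1,\dots,\mathcal O_r$, where $r\ge2$, and set $L=S_{\mathcal O_1}\times\cdots\times S_{\mathcal O_r}$. This $L$ is a proper Young subgroup containing $H$, conjugate to some $S_\lambda$ with $\lambda\ne(n)$. Then
\[
  \Ind_H^{S_n}K^{\pm}(H)=\Ind_L^{S_n}\Ind_H^L K^{\pm}(H)\subseteq\Ind_L^{S_n}K^{\pm}(L).
\]

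\emph{Transitive imprimitive case.} Here $H$ preserves a nontrivial block system, so $n=ab$ with $a,b>1$, and some conjugate of $H$ lies in $W_{a,b}$. The same computation with $W_{a,b}$ in place of $L$ gives
\[
  \Ind_H^{S_n}K^{\pm}(H)\subseteq\Ind_{W_{a,b}}^{S_n}K^{\pm}(W_{a,b}).
\]

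The one point needing care is conjugation, since $J_n^{\pm}$ is defined using the specific subgroups $S_\lambda$ and $W_{a,b}$. I handle it by showing that for $g\in S_n$ and $L'={}^gL$ one has
\[
  \Ind_{L'}^{S_n}K^{\pm}(L')=\Ind_L^{S_n}K^{\pm}(L).
\]
Conjugation by $g$ gives an isomorphism $c_g\colon\Omega^{\pm}(L)\to\Omega^{\pm}(L')$ sending $[H',\chi]_L$ to $[{}^gH',{}^g\chi]_{L'}$. It is compatible with linearization, so $c_g$ maps $K^{\pm}(L)$ onto $K^{\pm}(L')$. It also satisfies $\Ind_{L'}^{S_n}\circ c_g=\Ind_L^{S_n}$, because $[{}^gH',{}^g\chi]_{S_n}=[H',\chi]_{S_n}$ as $S_n$-conjugacy classes of monomial pairs. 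This equality lets me replace $L_H$ by $S_\lambda$, and the conjugate of $W_{a,b}$ by $W_{a,b}$ itself.

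I expect no real obstacle. The main thing to verify is the compatibility of $\ell^{\pm}$ with induction and conjugation. The paper asserts the induction compatibility, and it follows from the formula $\ell^{\pm}_G([H,\chi]_G)=[\Ind_H^G\Q_\chi]$ together with transitivity of induction of representations.
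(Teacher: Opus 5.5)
Your proposal is correct and follows the paper's argument: you place $H$ inside a proper Young subgroup or a conjugate of $W_{a,b}$, then use transitivity of induction together with the compatibility of $\ell^{\pm}$ with induction. Your explicit check that $\Ind_{{}^gL}^{S_n}K^{\pm}({}^gL)=\Ind_L^{S_n}K^{\pm}(L)$ makes precise a step the paper leaves implicit, namely passing from $L_H$ or the block stabilizer to the specific subgroups $S_\lambda$ and $W_{a,b}$ that appear in the definition of $J_n^{\pm}$.
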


\begin{proof}
If $H$ is intransitive, then it is contained in a proper Young subgroup. If
$H$ is transitive imprimitive, then it is contained in a conjugate of a
standard wreath subgroup. The result follows from the compatibility of the
linearization maps with induction and from transitivity of
induction, exactly as in Lemma~\ref{lem:all-nonprimitive-induction}.
\end{proof}

\begin{lemma}\label{lem:monomial-projection}
Let $H\in\mathcal H_n^{\mathrm{np}}$, let $\chi:H\to\{\pm1\}$, and let
$z\in K^{\pm}(S_n)$. Then
\[
  [H,\chi]_{S_n}z\in J_n^{\pm}.
\]
\end{lemma}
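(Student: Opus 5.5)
The plan is to copy the proof of Lemma~\ref{lem:projection}, replacing the unit $[H/H]$ by the monomial element $[H,\chi]_H\in\Omega^{\pm}(H)$. By the induction formula on standard basis elements, $\Ind_H^{S_n}([H,\chi]_H)=[H,\chi]_{S_n}$. The projection formula \eqref{eq:monomial-projection-formula}, applied with $L=H$, $x=[H,\chi]_H$ and $y=z$, then gives
\[
  [H,\chi]_{S_n}\,z
  =
  \Ind_H^{S_n}\bigl([H,\chi]_H\,\Res_H^{S_n}(z)\bigr).
\]
So it suffices to show that $[H,\chi]_H\,\Res_H^{S_n}(z)$ lies in $K^{\pm}(H)$. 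Once that is known, Lemma~\ref{lem:monomial-nonprimitive-induction} places the right-hand side in $J_n^{\pm}$.

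First, since $\ell^{\pm}$ commutes with restriction, we get $\ell^{\pm}_H(\Res_H^{S_n}z)=\Res_H^{S_n}\ell^{\pm}_{S_n}(z)=0$. Hence $\Res_H^{S_n}(z)\in K^{\pm}(H)$.

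Second, $\ell^{\pm}_H$ is a ring homomorphism, so $K^{\pm}(H)$ is an ideal of $\Omega^{\pm}(H)$. Multiplying by $[H,\chi]_H$ therefore keeps the product in $K^{\pm}(H)$. Concretely, $\ell^{\pm}_H\bigl([H,\chi]_H\,\Res z\bigr)=[\Q_\chi]\cdot 0=0$.

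Nothing here is a genuine obstacle. The only point to check is that the projection formula really allows an arbitrary first input $x\in\Omega^{\pm}(L)$, not just the unit, and that $[H,\chi]_H$ is indeed a basis element of $\Omega^{\pm}(H)$. Both are guaranteed by the formulas recorded just before \eqref{eq:monomial-projection-formula}. The multiplicativity of $\ell^{\pm}_H$ is stated explicitly when the linearization map is defined.
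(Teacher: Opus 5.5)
Your proposal is correct and follows the paper's proof: the projection formula with first input $[H,\chi]_H$ rewrites $[H,\chi]_{S_n}z$ as $\Ind_H^{S_n}\bigl([H,\chi]_H\,\Res_H^{S_n}(z)\bigr)$, and Lemma~\ref{lem:monomial-nonprimitive-induction} then places it in $J_n^{\pm}$. Your check that the inner element lies in $K^{\pm}(H)$ uses compatibility of $\ell^{\pm}$ with restriction and the fact that $\ker\ell_H^{\pm}$ is an ideal; this spells out a step the paper asserts in one line.
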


\begin{proof}
The projection formula in $\Omega^{\pm}$ gives
\[
  [H,\chi]_{S_n}z
  =
  \Ind_H^{S_n}
  \bigl([H,\chi]_H\,\Res_H^{S_n}(z)\bigr).
\]
The element in parentheses belongs to $K^{\pm}(H)$. The assertion now follows
from Lemma~\ref{lem:monomial-nonprimitive-induction}.
\end{proof}

\begin{lemma}\label{lem:monomial-zero-span}
If $x\in N_n^{\pm}$ and $\omega_n^{\pm}(x)=0$, then
\[
  x\in
  \left\langle
    [H,\chi]_{S_n}
    \mathrel{}\middle|\mathrel{}
    H\in\mathcal H_n^{\mathrm{np}},\
    \chi:H\to\{\pm1\}
  \right\rangle_{\Z}.
\]
\end{lemma}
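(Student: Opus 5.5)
The plan is to mirror the proof of Lemma~\ref{lem:zero-support}, working in the standard basis of $\Omega^{\pm}(S_n)$. Since $x\in N_n^{\pm}$, it has a unique expansion
\[
  x=\sum_{(H,\chi)\in\mathfrak R_n^{\pm}} m_{H,\chi}\,\Theta^{\pm}_{H,\chi}
  \qquad(m_{H,\chi}\in\Z).
\]
I will then expand each summand as
\[
  \Theta^{\pm}_{H,\chi}=[H,\chi]_{S_n}-\sigma_n^{\pm}\bigl([\Ind_H^{S_n}\Q_\chi]\bigr).
\]

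The first term $[H,\chi]_{S_n}$ already lies in the target span, because every $(H,\chi)\in\mathfrak R_n^{\pm}$ has $H\in\mathcal H_n^{\mathrm{np}}$ by definition. The second term lies in $P_n^{\pm,0}=\iota_{S_n}(P_n)$, so it is a $\Z$-linear combination of the elements $[S_\lambda,1]_{S_n}$ with $\lambda\vdash n$. For $\lambda\neq(n)$, the group $S_\lambda$ is a proper Young subgroup. It is therefore intransitive, so it belongs to $\mathcal H_n^{\mathrm{np}}$, and $[S_\lambda,1]_{S_n}$ also lies in the target span with the trivial character.

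Summing these contributions, $x$ equals an element of the target span plus $\coeff_{[S_n,1]_{S_n}}(x)\,[S_n,1]_{S_n}$. The only standard basis element that can fall outside the span is $[S_n,1]_{S_n}$, since $S_n$ is transitive and primitive in its natural action and so does not lie in $\mathcal H_n^{\mathrm{np}}$. Its coefficient is $\coeff_{[S_n,1]_{S_n}}(x)=-\omega_n^{\pm}(x)=0$ by hypothesis, which gives the assertion.

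The one point to check is that no term $[S_n,\operatorname{sgn}]_{S_n}$, or any other pair with first component $S_n$, can appear. Such a term cannot come from the Young-section terms, which carry only trivial characters. It also cannot come from the leading terms $[H,\chi]_{S_n}$, because $H\in\mathcal H_n^{\mathrm{np}}$ forces $H\neq S_n$. So the single coefficient controlled by $\omega_n^{\pm}$ is indeed the only obstruction.
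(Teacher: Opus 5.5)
Your proposal is correct and follows essentially the same route as the paper: expand $x$ in the basis $\Theta^{\pm}_{H,\chi}$, note that the Young-section terms involve only trivial-character Young pairs with $S_\lambda$ intransitive for $\lambda\neq(n)$, and use $\coeff_{[S_n,1]_{S_n}}(x)=-\omega_n^{\pm}(x)=0$ to remove the only possible outlier. Your extra check that no other pair with first component $S_n$ can occur (since $H\in\mathcal H_n^{\mathrm{np}}$ forces $H\neq S_n$) is a harmless and slightly more explicit version of the paper's argument.
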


\begin{proof}
By the definition of $N_n^{\pm}$, the element $x$ has a unique expansion
\[
  x
  =
  \sum_{(H,\chi)\in\mathfrak R_n^{\pm}}
  m_{H,\chi}\Theta_{H,\chi}^{\pm}
  \qquad
  (m_{H,\chi}\in\Z).
\]
Every relation occurring in this sum has the form
\[
  \Theta_{H,\chi}^{\pm}
  =
  [H,\chi]_{S_n}
  -
  \sigma_n^{\pm}\bigl([\Ind_H^{S_n}\Q_\chi]\bigr),
  \qquad
  H\in\mathcal H_n^{\mathrm{np}}.
\]
The Young-section term is a $\Z$-linear combination of trivial-character
Young pairs. Every proper Young subgroup is intransitive. Hence the only
standard basis element that may occur without having the form
$[H,\chi]_{S_n}$ with $H\in\mathcal H_n^{\mathrm{np}}$ is
$[S_n,1]_{S_n}$. By assumption,
\[
  \coeff_{[S_n,1]_{S_n}}(x)
  =
  -\omega_n^{\pm}(x)
  =
  0.
\]
The assertion follows.
\end{proof}

\begin{lemma}\label{lem:monomial-wreath-unit}
Let $n=ab$ with $a,b>1$, and set
\[
  \mathcal R_{a,b}^{\pm}
  :=
  -\Theta_{W_{a,b},1}^{\pm}.
\]
Then there exists
\[
  u\in
  \left\langle
    [H,\chi]_{S_n}
    \mathrel{}\middle|\mathrel{}
    H\in\mathcal H_n^{\mathrm{np}},\
    \chi:H\to\{\pm1\}
  \right\rangle_{\Z}
\]
such that
\[
  \mathcal R_{a,b}^{\pm}=[S_n,1]_{S_n}+u.
\]
\end{lemma}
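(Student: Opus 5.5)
The argument follows the proof of Lemma~\ref{lem:wreath-unit}, transported along $\iota_{S_n}$. By definition,
\[
  \mathcal R_{a,b}^{\pm}
  =-[W_{a,b},1]_{S_n}+\sigma_n^{\pm}\bigl([\Ind_{W_{a,b}}^{S_n}\Q_1]\bigr),
\]
and $\Ind_{W_{a,b}}^{S_n}\Q_1=\Q[S_n/W_{a,b}]$. Since $\sigma_n^{\pm}=\iota_{S_n}\circ\sigma_n$ and $\iota_{S_n}([S_n/H])=[H,1]_{S_n}$, we get
\[
  \mathcal R_{a,b}^{\pm}=\iota_{S_n}\bigl(-\Theta_{W_{a,b}}\bigr)=\iota_{S_n}(\mathcal R_{a,b}).
\]
So the first step is to record this identity, which follows directly from the definitions.

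Next we apply Lemma~\ref{lem:wreath-unit}. It gives $\mathcal R_{a,b}=[S_n/S_n]+u_0$, where $u_0$ is a $\Z$-linear combination of elements $[S_n/H]$ with $H\in\mathcal H_n^{\mathrm{np}}$. Applying the additive map $\iota_{S_n}$ yields
\[
  \mathcal R_{a,b}^{\pm}=[S_n,1]_{S_n}+\iota_{S_n}(u_0).
\]
Here $\iota_{S_n}(u_0)$ is a $\Z$-combination of elements $[H,1]_{S_n}$ with $H\in\mathcal H_n^{\mathrm{np}}$. It therefore lies in the required span, taking $\chi=1$ in each term, and we set $u:=\iota_{S_n}(u_0)$.

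One could instead argue directly. The coefficient of $[S_n,1]_{S_n}$ equals $|(S_n/W_{a,b})^{C_n}|=1$, by the same evaluation of characters at $c$ as in Proposition~\ref{prop:fixed-coset} together with Lemma~\ref{lem:wreath-one}. All remaining terms are either $[W_{a,b},1]_{S_n}$, and $W_{a,b}$ is transitive imprimitive, or $[S_\lambda,1]_{S_n}$ with $\lambda\ne(n)$, which are intransitive. There is no real obstacle in either route. The only point needing care is that $\iota_{S_n}$ is injective and carries $[S_n/S_n]$ to the basis element $[S_n,1]_{S_n}$, so the coefficient is preserved.
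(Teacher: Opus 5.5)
Your proposal is correct and is essentially the paper's argument. The paper likewise treats $\Theta_{W_{a,b},1}^{\pm}$ as the trivial-character copy $\iota_{S_n}(\Theta_{W_{a,b}})$, gets the coefficient $1$ at $[S_n,1]_{S_n}$ from Lemma~\ref{lem:wreath-one}, and notes that the remaining terms involve only $W_{a,b}$ and proper Young subgroups; your transport of Lemma~\ref{lem:wreath-unit} along the additive map $\iota_{S_n}$ packages both steps at once, and injectivity of $\iota_{S_n}$ is not actually needed for it.
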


\begin{proof}
The relation $\Theta_{W_{a,b},1}^{\pm}$ is the trivial-character copy of the
ordinary relation $\Theta_{W_{a,b}}$. Hence Lemma~\ref{lem:wreath-one} gives
\[
  \omega_n^{\pm}(\Theta_{W_{a,b},1}^{\pm})=1,
\]
and therefore
\[
  \coeff_{[S_n,1]_{S_n}}(\mathcal R_{a,b}^{\pm})=1.
\]
Moreover, $W_{a,b}$ is transitive imprimitive, while every proper Young
subgroup is intransitive. After separating the term $[S_n,1]_{S_n}$, all
remaining standard basis elements therefore have the required form.
\end{proof}

\begin{theorem}
\label{thm:monomial-main}
Let $n\ge4$ be composite. Then
\[
  J_n^{\pm}
  =
  \ker\bigl(\omega_n^{\pm}|_{N_n^{\pm}}\bigr),
  \qquad
  O_n^{\pm}\cong\Z.
\]
For every proper subgroup $H<S_n$ and every character
$\chi:H\to\{\pm1\}$,
\begin{equation}\label{eq:monomial-character}
  \omega_n^{\pm}(\Theta_{H,\chi}^{\pm})
  =
  \chi_{\Ind_H^{S_n}\Q_\chi}(c)
  =
  \sum_{\substack{gH\in S_n/H\\g^{-1}cg\in H}}
  \chi(g^{-1}cg).
\end{equation}
\end{theorem}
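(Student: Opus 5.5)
The plan is to transcribe the proof of Main Theorem~\ref{thm:main} into the monomial setting, using the lemmas just established. I would begin with the character formula \eqref{eq:monomial-character}, since it is independent of the rest.

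\textbf{Step 1: the character formula.} Write
\[
\sigma_n^{\pm}\bigl([\Ind_H^{S_n}\Q_\chi]\bigr)=\sum_\lambda a_{H,\chi,\lambda}[S_\lambda,1]_{S_n}.
\]
Apply $\ell^{\pm}_{S_n}$ and evaluate at $c$. For $\lambda\ne(n)$ the permutation character of $S_n/S_\lambda$ vanishes at $c$, because $S_\lambda$ is intransitive while $\langle c\rangle$ is transitive. Hence $\chi_{\Ind_H^{S_n}\Q_\chi}(c)=a_{H,\chi,(n)}$.

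If $(H,\chi)$ is not a trivial-character Young pair and $H<S_n$, then $[H,\chi]_{S_n}\ne[S_n,1]_{S_n}$. Therefore
\[
\coeff_{[S_n,1]_{S_n}}\bigl(\Theta^{\pm}_{H,\chi}\bigr)=-a_{H,\chi,(n)},
\]
which gives the first equality.

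If $(H,\chi)$ is a trivial-character Young pair with $H$ proper, then $\Theta^{\pm}_{H,\chi}=0$. The character value is also $0$, by the transitivity argument used in the proof of Main Theorem~\ref{thm:main}.

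The second equality is the Frobenius formula for induced characters. The set of $g$ with $g^{-1}cg\in H$ is stable under right multiplication by $H$. The value $\chi(g^{-1}cg)$ depends only on the coset $gH$, because $\chi$ is a class function on $H$. Grouping the usual $\frac1{|H|}\sum_g$ by cosets gives the stated sum.

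\textbf{Step 2: inclusion $J_n^{\pm}\subseteq\ker(\omega_n^{\pm}|_{N_n^{\pm}})$.} Lemma~\ref{lem:monomial-J-in-N} gives $J_n^{\pm}\subseteq N_n^{\pm}$. Every element of $\Ind_L^{S_n}K^{\pm}(L)$, with $L$ proper, is a combination of basis elements $[H,\chi]_{S_n}$ with $H\le L<S_n$. So $[S_n,1]_{S_n}$ never occurs, and $\omega_n^{\pm}$ vanishes on such elements, exactly as in Lemma~\ref{lem:omega-J}.

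\textbf{Step 3: the reverse inclusion.} Fix $n=ab$ with $a,b>1$ and let $x\in N_n^{\pm}$ with $\omega_n^{\pm}(x)=0$. Set $\mathcal R=\mathcal R^{\pm}_{a,b}$. By Lemma~\ref{lem:monomial-wreath-unit}, $\mathcal R=[S_n,1]_{S_n}+u$ with $u$ in the span of the elements $[H,\chi]_{S_n}$, $H\in\mathcal H_n^{\mathrm{np}}$.
\begin{itemize}
\item By Lemma~\ref{lem:monomial-zero-span}, $x$ lies in that same span.
\item Since $\mathcal R\in K^{\pm}(S_n)$, Lemma~\ref{lem:monomial-projection} applied termwise to the expansion of $x$ gives $x\mathcal R\in J_n^{\pm}$.
\item Since $x\in K^{\pm}(S_n)$ and the ring is commutative, Lemma~\ref{lem:monomial-projection} applied termwise to $u$ gives $xu=ux\in J_n^{\pm}$.
\end{itemize}
Because $[S_n,1]_{S_n}$ is the identity, $x=x\mathcal R-xu\in J_n^{\pm}$.

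\textbf{Step 4: surjectivity.} We have $W_{a,b}\in\mathcal H_n^{\mathrm{np}}$ and $(W_{a,b},1)$ is not a trivial-character Young pair, so $\Theta^{\pm}_{W_{a,b},1}\in N_n^{\pm}$. Its $\omega_n^{\pm}$-value is $1$ by Lemma~\ref{lem:monomial-wreath-unit}, or by Step~1 together with Lemma~\ref{lem:wreath-one}. Hence $\omega_n^{\pm}(N_n^{\pm})=\Z$, and the first isomorphism theorem gives $O_n^{\pm}\cong\Z$.

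\textbf{Main obstacle.} Everything hinges on the termwise use of Lemma~\ref{lem:monomial-projection}, which in turn rests on the monomial projection formula \eqref{eq:monomial-projection-formula}. Specifically, one needs the identity
\[
[H,\chi]_{S_n}z=\Ind_H^{S_n}\bigl([H,\chi]_H\Res_H^{S_n}z\bigr).
\]
I take this as given from the preceding material. The only other point to check is that the sign characters contribute nothing to the $[S_n,1]_{S_n}$-coordinate unless $H=S_n$. This holds because the standard basis is indexed by conjugacy classes of pairs, so $[H,\chi]_{S_n}$ with $H<S_n$ is a different basis element from $[S_n,1]_{S_n}$.
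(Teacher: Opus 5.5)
Your proposal is correct and follows the paper's proof essentially step for step. You obtain the character formula by evaluating the Young-section expansion at $c$ and applying the induced-character formula, and $\omega_n^{\pm}$ kills $J_n^{\pm}$ because induced elements never involve $[S_n,1]_{S_n}$. Surjectivity comes from $\Theta^{\pm}_{W_{a,b},1}$, and the reverse inclusion is the same identity $x=x\mathcal R^{\pm}_{a,b}-xu$ built from Lemmas~\ref{lem:monomial-wreath-unit}, \ref{lem:monomial-zero-span} and~\ref{lem:monomial-projection}. Your separate treatment of trivial-character Young pairs in Step~1 is harmless but unnecessary, since the coefficient computation already works for every proper $H$.
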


\begin{proof}
Evaluating the Young-section expansion at $c$ and applying the induced
character formula gives \eqref{eq:monomial-character}. An element induced
from a proper subgroup has zero coefficient at $[S_n,1]_{S_n}$, so
\[
  J_n^{\pm}\subseteq\ker\omega_n^{\pm}.
\]
Choose a factorization $n=ab$ with $a,b>1$. Then
\[
  \omega_n^{\pm}(\Theta_{W_{a,b},1}^{\pm})=1,
\]
so $\omega_n^{\pm}|_{N_n^{\pm}}$ is surjective.

Conversely, let
\[
  x\in\ker\bigl(\omega_n^{\pm}|_{N_n^{\pm}}\bigr).
\]
By
Lemma~\ref{lem:monomial-wreath-unit},
\[
  \mathcal R_{a,b}^{\pm}
  =
  [S_n,1]_{S_n}+u,
\]
where $u$ is a $\Z$-linear combination of the elements $[H,\chi]_{S_n}$
with $H\in\mathcal H_n^{\mathrm{np}}$. By
Lemma~\ref{lem:monomial-zero-span}, the same is true of $x$.
Since $\mathcal R_{a,b}^{\pm}\in K^{\pm}(S_n)$ and
$x\in K^{\pm}(S_n)$, Lemma~\ref{lem:monomial-projection}, applied termwise,
gives
\[
  x\mathcal R_{a,b}^{\pm}\in J_n^{\pm},
  \qquad
  xu\in J_n^{\pm}.
\]
Because $[S_n,1]_{S_n}$ is the identity of $\Omega^{\pm}(S_n)$,
\[
  x
  =
  x\mathcal R_{a,b}^{\pm}-xu
  \in J_n^{\pm}.
\]
Thus
\[
  J_n^{\pm}
  =
  \ker\bigl(\omega_n^{\pm}|_{N_n^{\pm}}\bigr).
\]
Surjectivity and the first isomorphism theorem now give
$O_n^{\pm}\cong\Z$.
\end{proof}

\subsection{Reduction to the ordinary Burnside ring}

For the embedding $\iota_G:\Omega(G)\to\Omega^{\pm}(G)$ defined above,
define an additive map in the opposite direction
\begin{equation}\label{eq:red-map}
  \red_G:\Omega^{\pm}(G)\longrightarrow\Omega(G)
\end{equation}
on standard basis elements by
\[
  \red_G([H,1]_G)=[G/H],
\]
and, for $\chi\ne1$, by
\[
  \red_G([H,\chi]_G)
  =
  [G/\ker\chi]-[G/H].
\]
We call $\red_G$ the \emph{index-two permutation reduction}: for nontrivial
$\chi$, the subgroup $\ker\chi$ has index $2$ in $H$, and the basis element
$[H,\chi]_G$ is replaced by a difference of permutation $G$-sets. This map
need not be multiplicative. If $K=\ker\chi$, then, as rational characters of
$H$,
\[
  \Ind_K^H\one_K=\one_H+\chi,
\]
where $\one_H$ denotes the trivial character of $H$. Hence
\[
  \ell_G\circ\red_G=\ell_G^{\pm}.
\]
Consequently,
\[
  \red_G(K^{\pm}(G))\subseteq K(G),
\]
and a direct calculation on basis elements shows that $\red_G$ commutes with
induction.

\begin{lemma}\label{lem:red-preserves-lattices}
For the embedding $\iota_G:\Omega(G)\to\Omega^{\pm}(G)$ defined above, we
have
\[
  \red_{S_n}(N_n^{\pm})\subseteq N_n,
  \qquad
  \red_{S_n}(J_n^{\pm})\subseteq J_n,
\]
and
\[
  \iota_{S_n}(N_n)\subseteq N_n^{\pm},
  \qquad
  \iota_{S_n}(J_n)\subseteq J_n^{\pm}.
\]
\end{lemma}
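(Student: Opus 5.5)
The plan is to handle the four inclusions in two pairs: first those involving $\iota_{S_n}$, then those involving $\red_{S_n}$. In each pair the $J$-statement comes from compatibility with induction, and the $N$-statement comes from checking what happens to basis elements.

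\emph{The embedding $\iota_{S_n}$.} Since $\ell^{\pm}_{G}\circ\iota_G=\ell_G$ and $\sigma_n^{\pm}=\iota_{S_n}\circ\sigma_n$, we get directly
\[
  \iota_{S_n}(\Theta_H)
  =[H,1]_{S_n}-\iota_{S_n}\sigma_n\bigl([\Q[S_n/H]]\bigr)
  =\Theta^{\pm}_{H,1}.
\]
If $H$ is action-imprimitive and not conjugate to a Young subgroup, then $(H,1)$ is not a trivial-character Young pair. Hence $\Theta^{\pm}_{H,1}$ is, up to the choice of conjugacy representative, a basis vector of $N_n^{\pm}$, and so $\iota_{S_n}(N_n)\subseteq N_n^{\pm}$. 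For $J_n$, note that $\iota$ commutes with induction on basis elements: $\Ind_L^{S_n}[H,1]_L=[H,1]_{S_n}$. Also, $\iota_L(K(L))\subseteq K^{\pm}(L)$ because $\ell_L^{\pm}\iota_L=\ell_L$. Therefore $\iota_{S_n}\Ind_L^{S_n}K(L)=\Ind_L^{S_n}\iota_LK(L)\subseteq\Ind_L^{S_n}K^{\pm}(L)$ for each proper Young $L$ and each $W_{a,b}$, which gives $\iota_{S_n}(J_n)\subseteq J_n^{\pm}$.

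\emph{The reduction $\red_{S_n}$ on $J$.} The paper already records that $\red$ commutes with induction and sends $K^{\pm}$ into $K$. For $L$ a proper Young or standard wreath subgroup, this gives
\[
  \red_{S_n}\Ind_L^{S_n}K^{\pm}(L)
  =\Ind_L^{S_n}\red_LK^{\pm}(L)\subseteq\Ind_L^{S_n}K(L).
\]
Summing over the allowed $L$ yields $\red_{S_n}(J_n^{\pm})\subseteq J_n$.

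\emph{The reduction $\red_{S_n}$ on $N$.} Take $x\in N_n^{\pm}$. Then $\red_{S_n}(x)\in K(S_n)$, so by Proposition~\ref{prop:standard-basis} it has a unique expansion in the $\Theta_H$. As in the proof of Lemma~\ref{lem:J-in-N}, the coefficient of $\Theta_H$ for non-Young $H$ equals the Burnside coefficient of $[S_n/H]$. So it suffices to show that $\red_{S_n}(x)$ has no basis term $[S_n/H]$ with $H$ transitive primitive, other than possibly $[S_n/S_n]$. Here $S_n$ itself is Young, so that term is harmless. Expand $x$ through the $\Theta^{\pm}_{H,\chi}$ with $H\in\mathcal H_n^{\mathrm{np}}$.
\begin{itemize}
\item The Young-section parts are trivial-character Young pairs, and they reduce to Young cosets.
\item A term $[H,\chi]_{S_n}$ reduces to $[S_n/H]$, or to $[S_n/\ker\chi]-[S_n/H]$.
\end{itemize}

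The main point to check is that $\ker\chi$ stays action-imprimitive. This holds because $\ker\chi\le H$. If $H$ is intransitive, so is every subgroup of $H$. If $H$ preserves a nontrivial block system, so does every subgroup of $H$, and such a subgroup is therefore either intransitive or transitive imprimitive. Hence every non-Young term appearing lies in $\mathcal H_n^{\mathrm{np}}$, and $\red_{S_n}(x)\in N_n$.

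The only delicate point I anticipate is verifying the asserted compatibility of $\red$ with induction when $\ker\chi$ is computed inside $L$ versus inside $S_n$. On basis elements this is immediate, since $\Ind_L^{S_n}[L/K]=[S_n/K]$ and the kernel of $\chi$ does not depend on the ambient group. So I would simply cite it.
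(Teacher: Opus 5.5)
Your proposal is correct and follows essentially the same route as the paper. The $J$-inclusions and the $\iota$-inclusions come from compatibility with induction and from $\iota_{S_n}(\Theta_H)=\Theta^{\pm}_{H,1}$, and the key point for $\red_{S_n}(N_n^{\pm})\subseteq N_n$ is that $\ker\chi\le H$ stays in $\mathcal H_n^{\mathrm{np}}$. The only difference is cosmetic: the paper computes $\red_{S_n}(\Theta^{\pm}_{H,\chi})=\Theta_{\ker\chi}-\Theta_H$ explicitly, whereas you use the coordinate-comparison argument of Lemma~\ref{lem:J-in-N}, which amounts to the same thing.
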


\begin{proof}
The last two inclusions and the inclusion
$\red_{S_n}(J_n^{\pm})\subseteq J_n$ follow directly from the definitions and
from compatibility with induction. If $\chi\ne1$ and $K=\ker\chi$, then
\[
  \red_{S_n}(\Theta_{H,\chi}^{\pm})
  =
  \Theta_K-\Theta_H,
\]
whereas
\[
  \red_{S_n}(\Theta_{H,1}^{\pm})=\Theta_H.
\]
If $H\in\mathcal H_n^{\mathrm{np}}$, then $K\le H$ either remains
transitive imprimitive with the same block system or is intransitive. Thus
$K\in\mathcal H_n^{\mathrm{np}}$, proving
$\red_{S_n}(N_n^{\pm})\subseteq N_n$.
\end{proof}

\begin{theorem}
\label{thm:monomial-ordinary}
For every composite integer $n\ge4$, the maps $\red_{S_n}$ and $\iota_{S_n}$
induce mutually inverse isomorphisms
\[
  \overline{\red}_{S_n}:O_n^{\pm}\xrightarrow{\sim}O_n,
  \qquad
  \overline{\iota}_{S_n}=\overline{\red}_{S_n}^{-1}.
\]
Moreover,
\[
  J_n^{\pm}\cap\iota_{S_n}(N_n)=\iota_{S_n}(J_n),
  \qquad
  \omega_n\circ\red_{S_n}=\omega_n^{\pm}
  \quad\text{on }N_n^{\pm}.
\]
\end{theorem}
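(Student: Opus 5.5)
Most of the work is already done by Lemma~\ref{lem:red-preserves-lattices}: $\red_{S_n}$ maps $N_n^{\pm}$ into $N_n$ and $J_n^{\pm}$ into $J_n$, and $\iota_{S_n}$ maps $N_n$ into $N_n^{\pm}$ and $J_n$ into $J_n^{\pm}$. So both maps descend to homomorphisms $\overline{\red}_{S_n}:O_n^{\pm}\to O_n$ and $\overline{\iota}_{S_n}:O_n\to O_n^{\pm}$. I will not check the inverse relation on lattices directly. Instead I will compare both maps with the $n$-cycle mark obstructions, which identify each quotient with $\Z$ by Main Theorem~\ref{thm:main} and Theorem~\ref{thm:monomial-main}.

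\textbf{Step 1: compatibility with the obstructions.} First I show $\omega_n\circ\red_{S_n}=\omega_n^{\pm}$ on $N_n^{\pm}$. Both sides are $-\coeff$ of the identity basis element, so it is enough to compare coefficients of $[S_n/S_n]$ and $[S_n,1]_{S_n}$. On a basis element $[H,1]$, the map $\red$ gives $[S_n/H]$, whose coefficient at $[S_n/S_n]$ is $1$ exactly when $H=S_n$. On $[H,\chi]$ with $\chi\ne1$, it gives $[S_n/\ker\chi]-[S_n/H]$. This contributes $-1$ at $[S_n/S_n]$ exactly when $H=S_n$ and $\chi=\operatorname{sgn}$.

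So the identity holds on all of $N_n^{\pm}$ provided the coefficient of $[S_n,\operatorname{sgn}]_{S_n}$ vanishes there. This is true because every $\Theta^{\pm}_{H,\chi}$ with $H\in\mathcal H_n^{\mathrm{np}}$ involves only $[H,\chi]$ and trivial-character Young pairs, and $S_n$ is not action-imprimitive. Alternatively, one can use the formula of Lemma~\ref{lem:red-preserves-lattices}, $\red(\Theta^{\pm}_{H,\chi})=\Theta_K-\Theta_H$, together with Proposition~\ref{prop:fixed-coset} and \eqref{eq:monomial-character}. Indeed, $\phi_{C_n}([S_n/K])-\phi_{C_n}([S_n/H])$ equals the sum of $\chi(g^{-1}cg)$ over the $c$-fixed cosets, since $\Ind_K^H\one=\one+\chi$. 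The trivially checked identity $\omega_n^{\pm}\circ\iota_{S_n}=\omega_n$ holds because $\iota$ sends $[S_n/S_n]$ to $[S_n,1]$.

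\textbf{Step 2: conclude.} By the two main theorems, $\overline\omega_n:O_n\to\Z$ and $\overline\omega_n^{\pm}:O_n^{\pm}\to\Z$ are isomorphisms. Step 1 gives $\overline\omega_n\circ\overline{\red}=\overline\omega_n^{\pm}$ and $\overline\omega_n^{\pm}\circ\overline\iota=\overline\omega_n$. Hence both $\overline{\red}$ and $\overline\iota$ are isomorphisms. Moreover $\overline{\red}\circ\overline\iota=\overline\omega_n^{-1}\overline\omega_n^{\pm}\overline\iota=\mathrm{id}$, so they are mutually inverse. As a check, $\red\circ\iota=\mathrm{id}$ already holds on $\Omega(S_n)$.

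\textbf{Step 3: the lattice identity.} The inclusion $\iota(J_n)\subseteq J_n^{\pm}\cap\iota(N_n)$ is in Lemma~\ref{lem:red-preserves-lattices}. Conversely, let $\iota(y)\in J_n^{\pm}$ with $y\in N_n$. Then $\omega_n(y)=\omega_n^{\pm}(\iota(y))=0$, because $J_n^{\pm}\subseteq\ker\omega_n^{\pm}$. Main Theorem~\ref{thm:main} then gives $y\in J_n$. (Equivalently, $y=\red(\iota(y))\in\red(J_n^{\pm})\subseteq J_n$.)

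\textbf{Main obstacle.} The only delicate point is Step 1, namely the possible contribution of $[S_n,\operatorname{sgn}]_{S_n}$ to the $[S_n/S_n]$-coefficient after reduction. I would handle it by the support argument above. That argument uses only that $N_n^{\pm}$ is spanned by relations indexed by action-imprimitive pairs, plus trivial-character Young terms.
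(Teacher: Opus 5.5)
Your proof is correct, but it reaches the isomorphism by a different route from the paper.

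\textbf{The paper's route.} The paper argues at the level of lattices. Besides $\red_{S_n}\circ\iota_{S_n}=\mathrm{id}$, it shows that $\iota_{S_n}\circ\red_{S_n}$ is the identity modulo $J_n^{\pm}$ on each basis element $\Theta^{\pm}_{H,\chi}$ of $N_n^{\pm}$. The tool is the explicit local relation
\[
\Theta^{\pm}_{H,\chi}-\Theta^{\pm}_{K,1}+\Theta^{\pm}_{H,1}=\Ind_H^{S_n}\bigl([H,\chi]_H-[K,1]_H+[H,1]_H\bigr),\qquad K=\ker\chi,
\]
which lies in $J_n^{\pm}$ because $H$ is action-imprimitive. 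The intersection formula then comes from $\red_{S_n}(J_n^{\pm})\subseteq J_n$. The compatibility $\omega_n\circ\red_{S_n}=\omega_n^{\pm}$ is checked separately, by evaluating characters at $c$.

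\textbf{Your route.} You make the obstruction compatibility the key lemma. Your support argument proves it cleanly: it isolates the only possible problem term $[S_n,\operatorname{sgn}]_{S_n}$ and excludes it because $S_n$ is primitive. You then deduce everything from the fact that $\overline\omega_n$ and $\overline\omega_n^{\pm}$ identify both quotients with $\Z$.

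\textbf{What each buys.} Your argument is shorter once Main Theorem~\ref{thm:main} and Theorem~\ref{thm:monomial-main} are available. However, it depends on both of them, and hence on $n$ being composite. The paper's argument uses neither. It therefore gives $O_n^{\pm}\cong O_n$ and $J_n^{\pm}\cap\iota_{S_n}(N_n)=\iota_{S_n}(J_n)$ without the compositeness hypothesis; for instance, it yields $O_n^{\pm}=0$ for prime $n\ge5$. It also shows concretely why the sign-twisted generators are redundant modulo $J_n^{\pm}$. Your parenthetical alternative in Step~3 is exactly the paper's argument for the intersection formula.
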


\begin{proof}
Lemma~\ref{lem:red-preserves-lattices} shows that both maps descend to the
quotients, and
\[
  \red_{S_n}\circ\iota_{S_n}=\mathrm{id}.
\]

Suppose that $\chi\ne1$ and put $K=\ker\chi$. Then
\begin{equation}\label{eq:local-monomial-relation}
  \Theta_{H,\chi}^{\pm}
  -\Theta_{K,1}^{\pm}
  +\Theta_{H,1}^{\pm}
  =
  [H,\chi]_{S_n}-[K,1]_{S_n}+[H,1]_{S_n}.
\end{equation}
The right-hand side is induced from the relation
\[
  [H,\chi]_H-[K,1]_H+[H,1]_H\in K^{\pm}(H).
\]
If $H\in\mathcal H_n^{\mathrm{np}}$, then
Lemma~\ref{lem:monomial-nonprimitive-induction} shows that
\eqref{eq:local-monomial-relation} belongs to $J_n^{\pm}$. Hence
$\iota_{S_n}\circ\red_{S_n}$ is the identity modulo $J_n^{\pm}$ on every
basis element of $N_n^{\pm}$. The two induced maps are therefore
mutually inverse.

If $x\in J_n^{\pm}\cap\iota_{S_n}(N_n)$, then
$\red_{S_n}(x)\in J_n$ and
\[
  x=\iota_{S_n}\bigl(\red_{S_n}(x)\bigr)\in\iota_{S_n}(J_n).
\]
This proves one inclusion in the intersection formula; the reverse inclusion
is immediate. Finally, for the trivial character the compatibility of the two
obstructions is the ordinary fixed-coset formula. If $\chi\ne1$, it follows by
evaluating
\[
  \red_{S_n}(\Theta_{H,\chi}^{\pm})=\Theta_K-\Theta_H
\]
and
\[
  \Ind_K^H\one_K=\one_H+\chi
\]
at $c$. Thus
\[
  \omega_n\circ\red_{S_n}=\omega_n^{\pm}
  \quad\text{on }N_n^{\pm}.
\]
\end{proof}

\begin{corollary}
If $n\ge6$ is composite, then $O_n^{\pm}$ also recovers the primitive
quotient, and the natural composite
\[
  O_n^{\pm}\xrightarrow{\sim}O_n\xrightarrow{\sim}\Prim(S_n)
\]
is an isomorphism. For $n=4$, this composite corresponds to reduction modulo
$2$ from $\Z$ to $\Z/2\Z$.
\end{corollary}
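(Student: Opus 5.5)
The plan is to compose the isomorphism $\overline{\red}_{S_n}:O_n^{\pm}\xrightarrow{\sim}O_n$ of Theorem~\ref{thm:monomial-ordinary} with the comparison map $\rho_n$ of Main Theorem~\ref{thm:primitive-comparison}. The map $O_n^{\pm}\to\Prim(S_n)$ should be defined as $x+J_n^{\pm}\mapsto\red_{S_n}(x)+I_n$, which equals $\rho_n\circ\overline{\red}_{S_n}$. It is well defined because $\red_{S_n}(N_n^{\pm})\subseteq N_n$ and $\red_{S_n}(J_n^{\pm})\subseteq J_n\subseteq I_n$ by Lemma~\ref{lem:red-preserves-lattices}. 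Equivalently, since $\overline{\iota}_{S_n}$ is the inverse of $\overline{\red}_{S_n}$, the composite is the unique map whose pullback along $\overline{\iota}_{S_n}$ is $\rho_n$. This is the sense in which the composite is \emph{natural}.

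For composite $n\ge6$, $\overline{\red}_{S_n}$ is an isomorphism by Theorem~\ref{thm:monomial-ordinary}, and $\rho_n$ is an isomorphism by part (i) of Main Theorem~\ref{thm:primitive-comparison}. Hence their composite is an isomorphism, so $O_n^{\pm}$ recovers $\Prim(S_n)\cong\Z$.

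For $n=4$, the first step is to fix the identification $O_4^{\pm}\cong\Z$ given by $\overline{\omega}_4^{\pm}$, which comes from Theorem~\ref{thm:monomial-main}. The identity $\omega_4\circ\red_{S_4}=\omega_4^{\pm}$ on $N_4^{\pm}$ from Theorem~\ref{thm:monomial-ordinary} then shows that $\overline{\red}_{S_4}$ corresponds to the identity $\Z\to\Z$ under $\overline{\omega}_4^{\pm}$ and $\overline{\omega}_4$. Part (ii) of Main Theorem~\ref{thm:primitive-comparison} identifies $\rho_4$ with reduction modulo $2$. The composite is therefore reduction modulo $2$.

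The only step needing care is this compatibility of the identifications with $\Z$ in the case $n=4$. It is exactly the identity $\omega_n\circ\red_{S_n}=\omega_n^{\pm}$ already established, so no real obstacle remains. As a consistency check, the generator $-\Theta^{\pm}_{W_{2,2},1}=\iota_{S_4}(-\Theta_{W_{2,2}})$ has coefficient $1$ at $[S_4,1]_{S_4}$. It maps to $-\Theta_{W_{2,2}}$, whose image generates $\Prim(S_4)\cong\Z/2\Z$.
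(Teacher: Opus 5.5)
Your proposal is correct and matches the paper's approach. The paper states the corollary without a separate proof, as the immediate combination of Theorem~\ref{thm:monomial-ordinary} with Main Theorem~\ref{thm:primitive-comparison}, that is, the composite $\rho_n\circ\overline{\red}_{S_n}$; your use of $\omega_n\circ\red_{S_n}=\omega_n^{\pm}$ on $N_n^{\pm}$ to align the identifications $O_4^{\pm}\cong\Z\cong O_4$ is exactly the compatibility the $n=4$ clause needs.
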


\end{document}